\documentclass[11pt]{article}
\usepackage {graphics}
\usepackage {pstricks,pst-node} 
\usepackage{mathtools}
\usepackage{amsmath,amssymb,amsbsy}
\usepackage{multirow}
\usepackage{amsfonts}
\usepackage{amsthm}
\usepackage{graphicx, enumerate}
\usepackage{latexsym}
\usepackage{longtable}
\usepackage{tabularx}
\usepackage{setspace}
\usepackage{float}
\usepackage{rotating}
\usepackage{tikz}
\usepackage{verbatim}
\usepackage{soul}
\usepackage{oldlfont,dsfont}
\usetikzlibrary{graphs}
\usepackage[T1]{fontenc}
\usepackage[polish, english]{babel}
\usepackage{polski}

\newtheorem{theorem}{Theorem}
\newtheorem{conjecture}[theorem]{Conjecture}
\newtheorem{corollary}[theorem]{Corollary}
\newtheorem{proposition}[theorem]{Proposition}

\newtheorem{observation}[theorem]{Observation}

\newtheorem{remark}[theorem]{Remark}

\title{Strong majority colorings of graphs}
\author{Rafa{\l} Kalinowski, Mateusz Kamyczura, \\ Monika Pil\'sniak, Mariusz Wo{\'z}niak \\\mbox{ }\\
AGH University of Krakow,  Poland
}
\date{}

\begin{document}
\maketitle

\begin{abstract}
Motivated by majority vertex-colorings of graphs and digraphs and majority edge-colorings of graphs, we introduce two concepts of strong majority colorings. A strong majority vertex-coloring of a graph $G=(V,E)$ is a mapping $c:V\rightarrow C$ such that for every vertex $v\in V$ and every color $\alpha\in C$, at most half of the neighbors of $v$ have color $\alpha$. The strong majority number of $G$, denoted Maj$(G)$, is the least number of colors in such a coloring. We show that Maj$(G)$ can be arbitrarily large and prove a tight upper bound Maj$(G)\le 2\Delta(G)+1$ for every graph $G$ without pendant vertices. A strong majority edge-coloring of a graph $G$ is a mapping $c:E\rightarrow C$ such that for every edge $e\in E$ and every color $\alpha\in C$, at most half of the edges adjacent to $e$ have color $\alpha$. The strong majority index of $G$, denoted Maj'$(G)$, is the least number of colors in such a coloring. It is shown that there is an upper constant bound for Maj'$(G)$ of all admissible graphs $G$. We conjecture that this constant is as small as 4 and confirm this conjecture for numerous graph classes.
\end{abstract}

\section{Motivation}

We introduce two graph invariants related to sizes of color classes in the neighborhood of a vertex or an edge. To make this paper more readable, we also give names and notation to analogous invariants previously considered in the literature, which were an inspiration to our research. 

Let $G$ be a simple undirected graph. We call a vertex-coloring of $G$ {\it majority} if for every vertex $v$ at most half its neighbors share the color of $v$. By ${\mathrm maj}(G)$ we denote the minimum number of colors among all majority vertex-colorings of the graph $G$. These colorings were first considered in 1966 by Lov\'asz \cite{Lov66}, who proved the following classical result.

\begin{theorem} [Lov\'asz \cite{Lov66}]\label{Lov}
For every finite graph $G$, $${\mathrm maj}(G)=2.$$   
\end{theorem}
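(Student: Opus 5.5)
We prove the upper bound ${\mathrm maj}(G)\le 2$ by a potential-function argument. The lower bound ${\mathrm maj}(G)\ge 2$ is immediate whenever $G$ has an edge: with one color, any vertex of positive degree has all of its neighbors in its own color, which is more than half. For an edgeless graph the condition holds vacuously with a single color, so the equality should be read for graphs with at least one edge, and we assume that from now on.

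For the upper bound, consider all $2^{|V|}$ colorings $c:V\to\{1,2\}$. Among them, choose one that minimizes the number $m(c)$ of monochromatic edges, that is, edges whose two endpoints receive the same color. Such a minimizer exists because $G$ is finite. Equivalently, we are choosing a maximum cut $(V_1,V_2)$ of $G$.

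We claim that this coloring is majority. Suppose instead that some vertex $v$ has more than half of its neighbors in its own color. Let $s$ be the number of neighbors of $v$ sharing its color and $d$ the number with the other color, so $s>d$. Recolor $v$ with the other color and leave every other vertex unchanged. Only edges incident to $v$ change status: the $s$ monochromatic edges at $v$ become bichromatic, and the $d$ bichromatic edges become monochromatic. Hence $m$ drops by $s-d>0$, which contradicts minimality. Therefore every vertex has at most half of its neighbors in its own color, and ${\mathrm maj}(G)\le 2$.

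There is no serious obstacle here. The only points needing care are that the local recoloring changes $m$ only through edges at $v$, which is clear, and the degenerate edgeless case noted above. Alternatively, the same argument can be phrased algorithmically: repeatedly flip any violating vertex. Each flip strictly decreases the nonnegative integer $m$, so the process terminates in a majority $2$-coloring.
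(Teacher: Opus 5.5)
Your proof is correct. Choosing a $2$-coloring that minimizes the number of monochromatic edges (equivalently, a maximum cut) and noting that flipping a violating vertex would lower this number by $s-d>0$ is the standard argument for Lov\'asz's theorem; the paper gives no proof of its own and only quotes the result as background. Your caveat is also justified: an edgeless graph has ${\mathrm maj}(G)=1$, so the statement should be read as ${\mathrm maj}(G)\le 2$, with equality exactly when $G$ has at least one edge.
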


Majority vertex-colorings were later investigated for infinite graphs. In this case, for a subset set $S$ of an infinite set $\Omega$, the condition that the cardinality of $S$ is at most half the elements of $\Omega$  means that the complement of $S$ is of the same cardinality as $\Omega$. Shelah and Milner \cite{ShMi90} showed that ${\mathrm maj}(G)\le 3$ for every graph of arbitrary cardinality, and there exist uncountable graphs with ${\mathrm maj}(G)=3$. The renowned Unfriendly Partition Conjecture states that ${\mathrm maj}(G)=2$ for every countable graph $G$. This conjecture is often viewed as one of the most intriguing and difficult open problems in infinite graph theory. It has been confirmed only for some classes of graphs, in particular for \\ 
-- graphs with finitely many vertices of infinite degree~(Aharoni, Milner, and Prikry \cite{AMP90}),
\\ -- graphs with finitely many vertices of finite degree~(Aharoni, Milner, and Prikry \cite{AMP90}),
\\ -- rayless graphs~(Bruhn, Diestel, Georgakopoulos, and Spr\"ussel \cite{BDGS10}),
\\ -- graphs without a subdivision of an infinite clique~(Berger \cite{Ber17}),
\\ -- line graphs (Kalinowski, Pil\'sniak, and Stawiski \cite{KPS25}).

\vspace{6mm}
In 2017, Kreutzer, Oum, Seymour, van der Zyphen, and Woods \cite{KOSvW17} introduced the following analogous notion for digraphs. A vertex-coloring of a digraph $D$ is {\it majority} if for every vertex $v$ at most half its out-neighbors share the color of $v$. Let us denote the minimum number of colors in such a coloring by $\overrightarrow{{\mathrm maj}}(D)$. In \cite{KOSvW17} it was shown that $\overrightarrow{{\mathrm maj}}(D)\le 4$, and it was conjectured that $\overrightarrow{{\mathrm maj}}(D)\le 3$ for every finite digraph $D$. This conjecture is still open but confirmed in some cases (cf. \cite{ABGGPZ25} and references therein).

\vspace{6mm}

In 2023, Bock, Kalinowski, Pardey, Pil\'sniak, and Wo\'zniak \cite{BKPPRW23} defined {\it majority edge-colorings} of graphs. These are edge-colorings such that for every vertex $v$ and every color $\alpha$, at most half of the edges incident with $v$ have the color $\alpha$. Clearly, a graph having a vertex of degree 1 does not admit such a coloring. Let us denote the minimum number of colors in a majority edge-coloring of a graph $G$ by ${\mathrm maj'\,}(G)$. 

The following upper bound for ${\mathrm maj'\,}(G)$ was established in \cite{BKPPRW23}.

\begin{theorem} [\cite{BKPPRW23}]
Every finite graph $G$ with minimum degree $\delta(G)\ge 2$ satisfies  $${\mathrm maj'\,}(G)\le 4.$$
Moreover, if $\delta(G)\ge 4$, then ${\mathrm maj'\,}(G)\le 3.$
\end{theorem}
The first part of the above theorem was extended by Kalinowski, Pil\'sniak, and Stawiski in \cite{KPS25} to infinite graphs of arbitrary cardinality, also for the list setting. Moreover, as a consequence of their investigations of majority edge-colorings, they proved the following result for majority vertex-colorings.
\begin{theorem} [\cite{KPS25}]\label{line}
If $G$ is an infinite line graph of arbitrary cardinality, then $${\mathrm maj}(G)=2.$$    
\end{theorem}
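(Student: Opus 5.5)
We pass to the root graph. Write $G=L(H)$, where $H$ has no isolated vertices. A $2$-vertex-coloring of $G$ is then a red/blue edge-coloring of $H$. It is a majority coloring of $G$ exactly when, for every edge $e=uv$ of $H$, at most half of the edges adjacent to $e$ (those at $u$ or at $v$ other than $e$) share the color of $e$. In the infinite case "at most half" means that the adjacent edges of the other color have the same cardinality as all adjacent edges.

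Finite components of $G$ are already handled by Theorem~\ref{Lov}, so we may color them separately and assume that $G$, and hence $H$, is connected and infinite. The target is a \emph{balanced} edge-coloring of $H$, with the following two properties.
\begin{enumerate}
\item At every vertex $w$ of finite degree $d(w)$, the numbers of red and blue edges differ by at most one.
\item At every vertex of infinite degree $\kappa$, both colors occur $\kappa$ times.
\end{enumerate}
Such a coloring suffices, by a local count. Let $e=uv$ be red and let $u$ have finite degree $d$. Then at most $\lceil d/2\rceil-1\le (d-1)/2$ of the other edges at $u$ are red. Summing the bounds at $u$ and at $v$ gives at most half of the $d(u)+d(v)-2$ adjacent edges. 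If an endpoint has infinite degree $\kappa$, the blue edges at that endpoint already have full cardinality among the adjacent edges.

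For finite graphs the standard balanced coloring comes from an Euler-tour argument. One adds a vertex joined to all odd-degree vertices and colors the edges alternately along an Euler tour. The only defect is a single even-degree vertex with imbalance $2$, and it occurs only when a component has all degrees even and an odd number of edges. In a connected infinite graph this parity obstruction should disappear. The plan is to decompose $E(H)$ into edge-disjoint trails, namely rays, double rays and finite trails, in such a way that:
\begin{enumerate}
\item each finite-degree vertex of odd degree is an endpoint of exactly one trail;
\item each finite-degree vertex of even degree is an endpoint of none;
\item every infinite-degree vertex $w$ is passed through $\kappa=d(w)$ times.
\end{enumerate}
Coloring each trail alternately then makes each pass through a vertex contribute one red and one blue edge. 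This yields the balanced coloring. For countable $H$, the decomposition can be built by a back-and-forth enumeration that extends trails towards infinity, which is possible because $H$ is connected and infinite. For uncountable $H$, I would use the usual elementary-submodel (or Shelah--Milner type) chain decomposition into countable-ish pieces and treat each piece by the countable argument.

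The main obstacle I expect is the interaction between vertices of finite and infinite degree. The finite-degree vertices need exact parity control, while the infinite-degree vertices need both colors with full cardinality. Local finite constraints could in principle be forced along a path in a way that exhausts one color at a vertex of infinite degree. To handle this I would first try a compactness argument. Apply the finite Euler argument to finite subgraphs in which every defect vertex is chosen to be a vertex that has an edge leaving the subgraph; this is possible since $H$ is infinite and connected. This gives a coloring that is balanced at all finite-degree vertices. I would then recolor alternate edges of suitably chosen disjoint rays through each infinite-degree vertex, using a transfinite bookkeeping of length $|V(H)|$, to supply the missing color there. Each such ray recoloring preserves balance at the vertices it passes through internally.
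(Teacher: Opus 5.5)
\textbf{Verdict: genuine gap.} The paper gives no proof of this statement. It is quoted from \cite{KPS25}, where it is obtained through majority edge-colorings of the root graph.

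\textbf{What is right.} Your reduction to a red/blue coloring of $H$ is the right starting point. Your target coloring, balanced at vertices of finite degree and using both colors $d(w)$ times at each vertex $w$ of infinite degree, is well chosen. Your check that such a coloring gives a majority coloring of $L(H)$ is correct; when both ends of an edge have infinite degree, use the end of larger degree. Your compactness step is also sound. Each balance condition involves only finitely many edges, so Tychonoff gives it in every cardinality.

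\textbf{The gap.} It is the one you flag yourself: getting the right colors at vertices of infinite degree, which is the whole difficulty. Your first plan, the trail decomposition, only restates this goal. The countable construction is not described, and the uncountable case is only named. Your second plan's repair step fails for two reasons.
\begin{enumerate}
\item Swapping colors along a ray preserves balance at its interior vertices only if the ray is color-alternating, and you do not arrange this.
\item Suitable rays need not exist. Let $H$ consist of countably many triangles $wa_ib_i$ sharing the vertex $w$. Balance at the degree-$2$ vertices forces $c(wa_i)=c(wb_i)\ne c(a_ib_i)$. So the coloring with every edge at $w$ red is balanced at all finite-degree vertices, and compactness may return exactly this coloring. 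Since $H-w$ is a matching, $H$ contains no ray at all.
\end{enumerate}
What is needed instead is to swap whole finite pieces hanging at $w$.

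\textbf{One way to close the gap.}
\begin{enumerate}
\item Split each vertex $w$ of infinite degree into $d(w)$ new vertices of degree $2$ by pairing up the edges at $w$. The resulting graph $H^*$ is locally finite and has the same edge set as $H$.
\item The infinite components of $H^*$ get exactly balanced colorings by your compactness argument.
\item A finite component of $H^*$ containing no copy would be a finite component of $H$, which is impossible. So every finite component contains a copy of some $w$. Its Euler coloring can therefore place the single possible defect (two edges of the same color) at such a copy.
\item Every other copy of $w$ contributes one edge of each color to $w$. If fewer than $d(w)$ copies of $w$ are defect-free, then $d(w)$ distinct finite components have their defect at $w$.
\item Swap all colors in a subfamily of these components of cardinality $d(w)$ whose complement in the family also has cardinality $d(w)$. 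Then $w$ receives both colors $d(w)$ times. Nothing else changes, because each component carries at most one defect.
\end{enumerate}
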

Thus, using the concept of majority edge-colorings, they confirmed the Unfriendly Partition Conjecture for line graphs. 

\vspace{5mm} 
\begin{remark}
{\rm Suppose that for an edge-coloring of a graph $G=(V,E)$ we require that, for every edge $e\in E$, at most half of the edges adjacent to $e$ have the color of $e$ (analogously to a majority vertex-coloring). Then, this coloring is equivalent to  majority vertex-colorings of the line graph of $G$. Consequently,  due to Theorem {\rm \ref{Lov}} and Theorem {\rm \ref{line}}, two colors suffice for such an edge-coloring of arbitrary cardinality. } 
\end{remark}

\section{Strong majority vertex-colorings}

Throughout, graph colorings need not be proper, and graphs are finite  and undirected, unless otherwise stated. 
Let $c$ be a vertex-coloring of a graph $G$. If for every vertex $v$ and every color $\alpha$, at most half at most half of the neighbors of $v$ have color $\alpha$, then $c$ is a~ {\it strong majority vertex-coloring} of $G$. The minimum number of colors in such a coloring is the {\it strong majority number} of $G$, denoted by ${\mathrm Maj}(G)$. Clearly, ${\mathrm Maj}(G)$ is defined only for graphs $G$ without pendant vertices, that is, for graphs $G$ with $\delta(G)\ge 2$.

Let us start with two easy examples.
\begin{observation}\label{cycle}
For a cycle $C_n$, we have ${\mathrm Maj}(C_n)=2$ if $n\equiv 0 \mod{4}$, and ${\mathrm Maj}(C_n)=3$ otherwise.  
\end{observation}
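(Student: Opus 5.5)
In a cycle every vertex $v_i$ has exactly two neighbors, $v_{i-1}$ and $v_{i+1}$. "At most half of the neighbors have color $\alpha$" then means at most one neighbor has that color. So a coloring $c$ of $C_n=v_0v_1\dots v_{n-1}$ is a strong majority vertex-coloring if and only if
$$c(v_{i-1})\neq c(v_{i+1})\quad\text{for all } i \pmod n.$$
I will therefore study the graph $H$ on $V(C_n)$ whose edges join vertices at distance two along the cycle. Then $\mathrm{Maj}(C_n)=\chi(H)$, and it remains to compute $\chi(H)$.

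The structure of $H$ depends on the parity of $n$. If $n$ is odd, stepping by $2$ visits every vertex, so $H$ is a single cycle of length $n$. If $n$ is even, $H$ splits into two disjoint cycles of length $n/2$, one on the even-indexed vertices and one on the odd-indexed vertices. For $n=4$ this degenerates: each "cycle" of length $2$ is just an edge $v_0v_2$ or $v_1v_3$, since the two neighbors of each vertex are distinct. For $n=3$, $H=C_3$.

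The conclusion follows from the chromatic numbers of these cycles.
\begin{itemize}
\item If $n$ is odd, $H$ is an odd cycle, so $\chi(H)=3$.
\item If $n\equiv 2 \pmod 4$, then $n/2$ is odd and at least $3$, so each component is an odd cycle and $\chi(H)=3$.
\item If $n\equiv 0 \pmod 4$, each component is an even cycle (or a single edge when $n=4$), so $\chi(H)=2$.
\end{itemize}

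For the $n\equiv 0 \pmod 4$ case I will also give an explicit coloring: color the vertices $1,1,2,2,1,1,2,2,\dots$ around the cycle. Then $v_{i-1}$ and $v_{i+1}$ always lie in different blocks of the pattern and so receive different colors. The lower bound $\mathrm{Maj}\ge 2$ is trivial, since with one color both neighbors would share it.

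None of these steps is a serious obstacle. The only point needing care is the small case $n=4$, where $H$ consists of single edges rather than cycles; it still gives value $2$, consistent with the statement.
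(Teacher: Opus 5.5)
Your proof is correct, but it is organized differently from the paper's. The paper starts from the same observation: vertices two steps apart must receive distinct colors. It then exhibits colorings directly. It uses the pattern $1,1,2,2,\ldots$ when $n\equiv 0\pmod 4$, and for the other residues it ends that pattern with $3$, with $3,3$, or with $1,3,3$. The claim that a third color is needed when $n\not\equiv 0\pmod 4$ is stated there without proof.

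You instead identify the constraint graph $H$ explicitly. It is one $n$-cycle for odd $n$ and two $n/2$-cycles for even $n$, and it is exactly the auxiliary graph from the paper's proof of Theorem~\ref{2Delta+1}, specialized to a cycle. Both bounds then follow at once from chromatic numbers of cycles. This gives two things:
\begin{itemize}
\item a genuine proof of the lower bound: an odd cycle in $H$ rules out $2$-colorings when $n$ is odd or $n\equiv 2\pmod 4$;
\item a uniform treatment of small $n$.
\end{itemize}
The second point matters. Taken literally, the paper's recipe for $n\equiv 3\pmod 4$ fails at $n=3$: the coloring $1,3,3$ gives the vertex of color $1$ two neighbors of color $3$. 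Your description $H=C_3$ handles this case automatically. The paper's route, in turn, writes down explicit colorings for every residue class.

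One wording point: define $H$ through the pairs $\{v_{i-1},v_{i+1}\}$ rather than ``distance two.'' For $n=3$ these vertices are adjacent in $C_n$, although your treatment of $n=3$ shows this is what you intend.
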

\begin{proof}
In a strong majority vertex-coloring of $C_n$, two vertices of distance 2 necessarily have two distinct colors. Color subsequent vertices with colors $1,1,2,2,\ldots$. This yields a strong majority vertex-coloring when $n\equiv 0 \mod{4}$. Otherwise, a third color is needed.   If $n\equiv 1 \mod{4}$, then we put 3 at the end of the above sequence of  colors. If $n\equiv 2\mod 4$, we end with $3,3$. For $n\equiv 3\mod 4$, we put $1,3,3$ on the last three vertices.
\end{proof}
\begin{observation}
For a complete graph $K_n$ with $n\ge 3$, ${\mathrm Maj}(K_n)=3$ except for $n=4$, when ${\mathrm Maj}(K_n)=4$.   
\end{observation}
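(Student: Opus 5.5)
The plan is to translate the condition into a bound on color class sizes. In $K_n$ every vertex $v$ has neighborhood $V\setminus\{v\}$, of size $n-1$. So a coloring is strong majority exactly when, for every vertex $v$ and every color $\alpha$, the number of vertices of color $\alpha$ other than $v$ is at most $(n-1)/2$. The binding case is $v$ lying outside the class. Hence the condition becomes: every color class $X$ satisfies $|X|\le (n-1)/2$ whenever $X\ne V$, which always holds since $n\ge 3$ forces at least two classes. If $v\in X$ the count is $|X|-1$, which is weaker.

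So the condition is that every color class has size at most $\lfloor (n-1)/2\rfloor$, and
$$\mathrm{Maj}(K_n)=\left\lceil \frac{n}{\lfloor (n-1)/2\rfloor}\right\rceil .$$
I evaluate this in three steps.

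\textbf{Lower bound of 3 for all $n\ge 3$.} With two colors, some class has at least $n/2>(n-1)/2$ vertices, which violates the condition. The same fact follows directly from the formula.

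\textbf{Upper bound of 3 for $n\ne 4$.} Split $V$ into three classes of sizes as equal as possible, each at most $\lceil n/3\rceil$. I must check $\lceil n/3\rceil\le\lfloor (n-1)/2\rfloor$.
\begin{itemize}
\item For $n=3$: $1\le 1$.
\item For $n=5$: $2\le 2$.
\item For $n\ge 6$: $\lceil n/3\rceil\le (n+2)/3\le (n-2)/2\le\lfloor (n-1)/2\rfloor$, where the middle inequality holds for $n\ge 10$.
\end{itemize}
The remaining values $6\le n\le 9$ are checked directly:
\begin{itemize}
\item $n=6$: $2\le 2$.
\item $n=7$: $3\le 3$.
\item $n=8$: $3\le 3$.
\item $n=9$: $3\le 4$.
\end{itemize}

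\textbf{The exceptional case $n=4$.} Here $\lfloor 3/2\rfloor=1$, so every class is a singleton and $4$ colors are needed. Four colors suffice trivially, so $\mathrm{Maj}(K_4)=4$.

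The main obstacle is only the small-case arithmetic in the upper bound, and the checks above handle it.
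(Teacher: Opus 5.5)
Your proof is correct and follows essentially the same route as the paper. An equitable partition into three classes works because $\lceil n/3\rceil\le\lfloor (n-1)/2\rfloor$ for every $n\ne 4$, and for $n=4$ no two vertices can share a color. You also make explicit two things the paper leaves implicit: the lower bound (two colors never suffice, since some class has at least $n/2>(n-1)/2$ vertices) and the general formula $\lceil n/\lfloor (n-1)/2\rfloor\rceil$.
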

\begin{proof}
Partition $V(K_n)$ into three subsets, as equitable as possible. Choose a different color for each subset. The equality $\lceil\frac n3\rceil\le \frac{n-1}{2}$ holds whenever $n\ne 4$. For $n=4$, if two vertices had the same color, then every other vertex had $2/3$ of neighbors of the same color.  
\end{proof}

The following two results immediately follow from the paper \cite{KOSvW17} of Kreutzer, Oum, Seymour, van der Zyphen, and Woods.  

\begin{theorem}[\cite{KOSvW17}, Theorem 3]
Let $G$ be a graph of order $n$. If $\delta(G)> 72\ln 3n$,
then $${\mathrm Maj}(G)\le 3.$$
\end{theorem}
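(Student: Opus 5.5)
The plan is to prove this directly with a random $3$-coloring, a Chernoff bound and a union bound, rather than translating Theorem~3 of \cite{KOSvW17}. That theorem concerns digraphs: it gives colorings in which each vertex has at most half of its out-neighbours sharing its own colour, with three colours, when out-degrees exceed $72\ln 3n$. Our condition is stronger, since for every vertex $v$ \emph{every} colour must occupy at most half of $N(v)$. Fortunately, the probabilistic argument behind their result gives this stronger property at essentially no extra cost, so I will simply rerun it.

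Colour each vertex of $G$ independently and uniformly at random with one of the colours $1,2,3$. Fix a vertex $v$ of degree $d\ge\delta(G)$ and a colour $\alpha$. Let $X_{v,\alpha}$ be the number of neighbours of $v$ receiving $\alpha$. Then $X_{v,\alpha}$ is a sum of $d$ independent Bernoulli$(1/3)$ variables, with mean $\mu=d/3$. The bad event is $X_{v,\alpha}>d/2=(1+\tfrac12)\mu$.

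By the multiplicative Chernoff bound $\Pr[X\ge(1+\varepsilon)\mu]\le e^{-\varepsilon^2\mu/3}$ with $\varepsilon=1/2$, we get $\Pr[X_{v,\alpha}>d/2]\le e^{-d/36}$. Equivalently, Hoeffding's inequality gives $e^{-2d(1/6)^2}=e^{-d/18}$.

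There are $3n$ pairs $(v,\alpha)$, so the union bound gives
\[
\Pr[\text{some bad event}]\le 3n\,e^{-\delta/36}.
\]
This is less than $1$ as soon as $\delta>36\ln 3n$, and the hypothesis $\delta>72\ln 3n$ is more than enough. Hence some $3$-colouring has every colour class occupying at most half of every neighbourhood, and ${\mathrm Maj}(G)\le 3$.

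The only delicate points are minor:
\begin{itemize}
\item The constant must match. The Chernoff constant already needs only $36\ln 3n$, and the stated $72\ln 3n$ is a safe margin.
\item The inequality must be strict versus non-strict. We need $X\le d/2$, and bounding $\Pr[X\ge d/2]$ instead of $\Pr[X>d/2]$ is only more restrictive, so this is harmless.
\end{itemize}

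Alternatively, one can view $G$ as a symmetric digraph and note that the proof of \cite[Theorem 3]{KOSvW17} is exactly this union bound over vertices and colours. Either way, the argument works because the Chernoff tail is applied to every colour class, not just to the colour of $v$. I expect no real obstacle beyond checking the constant.
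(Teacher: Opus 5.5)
Your argument is correct and matches the paper's route. The paper gives no proof of its own: it simply defers to \cite{KOSvW17}, whose argument is this random $3$-colouring with a concentration bound and a union bound over the $3n$ vertex--colour pairs (this is why $3n$ appears in the hypothesis), applied to $G$ viewed as a symmetric digraph. You rightly note that the strong property comes from that proof, not from the statement of their majority-colouring theorem, and your sharper Chernoff constant ($36$ instead of $72$) is a harmless improvement.
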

\begin{theorem} [\cite{KOSvW17}, Theorem 4]
Let $G$ be a graph with minimum degree $\delta\ge 1200$ and maximum degree at most $\exp{(\delta/72)}/12\delta$. Then $${\mathrm Maj}(G)\le 3.$$
\end{theorem}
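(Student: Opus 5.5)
The statement is the graph specialization of Theorem 4 of \cite{KOSvW17}, so I would deduce it by a reduction, keeping a direct probabilistic argument as a fallback. The reduction goes as follows. Replace every edge $uv$ of $G$ by the two arcs $uv$ and $vu$, giving a digraph $D$. In $D$ the out-neighbourhood of each vertex $v$ is exactly $N_G(v)$, and the out-degree equals $\deg_G(v)$. The proof of Theorem 4 in \cite{KOSvW17} produces a $3$-coloring in which, for \emph{every} vertex $v$ and \emph{every} color $\alpha$, at most half of the out-neighbours of $v$ receive $\alpha$. This is stronger than requiring it only for $\alpha=c(v)$, which is what a majority coloring of a digraph asks. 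Reading this stronger conclusion in $D$ gives, for each vertex $v$ of $G$ and each color $\alpha$, at most $\deg(v)/2$ neighbours of $v$ colored $\alpha$. That is precisely a strong majority vertex-coloring with $3$ colors, so ${\mathrm Maj}(G)\le 3$.

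The main obstacle is confirming that the cited theorem really delivers the \emph{every-color} conclusion, and not only the weaker own-color one. If it does not, I would reprove the statement directly with the Lovász Local Lemma, which is the method of \cite{KOSvW17}. Color each vertex independently and uniformly from $\{1,2,3\}$. For a vertex $v$ with $d=\deg(v)\ge\delta$ and a color $\alpha$, let $X_{v,\alpha}$ be the number of neighbours of $v$ colored $\alpha$. Then $\mathbb E X_{v,\alpha}=d/3$, and exceeding $d/2$ is a deviation of $d/6$. Hoeffding's inequality gives
\[
\Pr[X_{v,\alpha}>d/2]\le e^{-2(d/6)^2/d}=e^{-d/18}\le e^{-\delta/18}.
\]
Let $A_v$ be the event that some color is overrepresented in $N(v)$; by the union bound, $\Pr[A_v]\le 3e^{-\delta/18}$. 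The event $A_v$ depends only on the colors in $N(v)$. Hence $A_v$ is mutually independent of all $A_u$ with $N(u)\cap N(v)=\emptyset$, so each $A_v$ depends on at most $\Delta^2$ other events.

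The symmetric Local Lemma then requires $e\cdot 3e^{-\delta/18}(\Delta^2+1)\le 1$. This holds comfortably when $\Delta\le e^{\delta/72}/(12\delta)$ and $\delta\ge 1200$: in that case $\Delta^2\le e^{\delta/36}$, and the $e^{-\delta/18}$ factor dominates. The only care needed is the constant bookkeeping, checking that the Chernoff exponent used in \cite{KOSvW17}, possibly with constant $72$ instead of $18$, still beats the square of the degree bound. With the exponents above the margin is large, so I expect this step to go through.
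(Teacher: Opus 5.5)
Your proposal is correct, and your primary route is the paper's. The paper gives no argument beyond saying the result follows immediately from \cite{KOSvW17}. What it tacitly uses is the point you isolate: the random-colouring argument there must bound \emph{every} colour class in each out-neighbourhood, not just the vertex's own colour. Applied to the symmetric digraph of $G$, where the out-degree and in-degree of $v$ both equal $\deg_G(v)$ so the hypotheses transfer, this gives a strong majority colouring.

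Your fallback is a genuinely self-contained alternative, and it is sound:
\begin{itemize}
\item Hoeffding gives $\Pr[X_{v,\alpha}>d/2]\le e^{-d/18}$.
\item $A_v$ is determined by the colours on $N(v)$, so it is mutually independent of all $A_u$ with $N(u)\cap N(v)=\emptyset$, leaving at most $\Delta(\Delta-1)$ dependent events.
\item Under the stated hypotheses, $3e\cdot e^{-\delta/18}(\Delta^2+1)\le 3e\cdot e^{-\delta/36}<1$.
\end{itemize}
This route avoids relying on the internals of the cited proof. It also gives a stronger statement: the condition $\Delta^2+1\le e^{\delta/18}/(3e)$ suffices, so $\Delta$ may be as large as about $e^{\delta/36}$. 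This works because in an undirected graph $\Delta$ bounds both the size of each neighbourhood and the number of neighbourhoods meeting it, which makes the crude $\Delta^2$ dependency count affordable.

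One side remark of yours is wrong, though it does not affect your argument. An exponent $\delta/72$ would \emph{not} in general beat the square of the degree bound. With $p$ of order $e^{-\delta/72}$ and $\Delta$ at its permitted maximum, $p\Delta^2$ is of order $e^{\delta/72}/\delta^2$, which is unbounded. Your direct proof works precisely because it uses the whole neighbourhood with the sharper Hoeffding exponent $\delta/18$.
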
 

In Section \ref{edge}, we show that there exists a constant $C$ such that ${\mathrm Maj}(G)\le C$ for every line graph $G$ with $\delta(G)\ge 2$.

\vspace{0.5cm}

However, there are graphs with arbitrarily large ${\mathrm Maj}(G)$. Let $\widehat{K_n}$ be a~subdivision of the complete graph $K_n$, that is, a graph obtained from $K_n$ by replacing each edge by a path of length two. Clearly, ${\mathrm Maj}(\widehat{K_n})=n$ since each vertex $v\in V(K_n)$ has to have a distinct color because neighbors of a~vertex of degree two must have distinct colors. 
 Another example is a graph $\widetilde{K_n}$ obtained from $K_n$ by replacing each edge $uv$ by a diamond $K_4-e$ such that $u$ and $v$ are of degree 2 in $K_4-e$ (see Figure \ref{Kn}). Again, ${\mathrm Maj}(\widetilde{K_n})=n$.

%%%%%%%%%%%%%%%%%%%%%%

\begin{figure}[htb]
    \centering
    
\begin{tikzpicture}[scale=3]
    % Define styles
    \tikzstyle{vertex}=[circle, draw, inner sep=0pt, minimum size=6pt]
    \tikzstyle{plugger}=[circle, draw, inner sep=0pt, minimum size=6pt]
    
    % Draw vertices
    \node[vertex] (1) at (0,0) {};
    \node[vertex] (2) at (0,1) {};
    \node[vertex] (3) at (1,1) {};
    \node[vertex] (4) at (1,0) {};
    \node[plugger] (5) at (-0.1,0.5) {};
    \node[plugger] (6) at (0.1,0.5) {};
    
    % Draw edges
    \draw[thick] (1) -- (4) -- (3) -- (2) -- (4);
    \draw[thick] (1) -- (3);  % New edge 1-3
    
    % Draw plugger edges
    \draw[thick] (1) -- (5) -- (2);
    \draw[thick] (1) -- (6) -- (2);
    \draw[thick] (5) -- (6);
    
    % Label vertices
    \node at (0,-0.1) {$v_1$};
    \node at (0,1.1) {$v_2$};
    \node at (1,1.1) {$v_3$};
    \node at (1,-0.1) {$v_4$};
    \node at (-0.2,0.5) {$v_5$};
    \node at (0.2,0.5) {$v_6$};
\end{tikzpicture}

    \caption{Edge $v_1v_2$ of $K_4$ replaced by a diamond}
    \label{Kn}
\end{figure}

%%%%%%%%%%%%%%%%%%%%%%%%tutu

The above two examples have minimum degree 2 and 3, respectively. However, graphs with large strong majority numbers may have arbitrarily large minimum degree, as the following result shows.

\begin{theorem}
For any two integers $K$ and $\delta\ge 2$, there exists a graph $G$ with $\delta(G)=\delta$ and ${\mathrm Maj}(G)\ge K$.
\end{theorem}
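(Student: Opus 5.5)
The idea is to generalize $\widehat{K_n}$ and $\widetilde{K_n}$: start from a complete graph $K_n$ with $n\ge K$, and for every pair $u,v$ of its vertices attach a gadget $H_{uv}$ with two properties. Every vertex of the resulting graph $G$ has degree at least $\delta$, with equality attained somewhere. And in every strong majority vertex-coloring of $G$, the colors of $u$ and $v$ must differ. Then the $n$ original vertices receive pairwise distinct colors, so ${\mathrm Maj}(G)\ge n\ge K$.

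The gadget I would use is a complete bipartite graph. For each pair $\{u,v\}$, add a set $W_{uv}$ of $\delta$ new vertices and a set $A_{uv}$ of $\delta-2$ new vertices. Join every vertex of $W_{uv}$ to every vertex of $A_{uv}$ and to both $u$ and $v$. The gadgets share only the original vertices, and $u,v$ need not be adjacent to each other.

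The degrees work out as follows. Each $w\in W_{uv}$ has neighborhood $A_{uv}\cup\{u,v\}$, so $\deg w=\delta$. Each $a\in A_{uv}$ has degree $|W_{uv}|=\delta$. Each original vertex has degree $(n-1)\delta\ge\delta$. Hence $\delta(G)=\delta$.

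The main step is the forcing argument, and it does not work as stated. Suppose $c(u)=c(v)=\alpha$. Then $w\in W_{uv}$ already sees two neighbors of color $\alpha$ out of $\delta$, which violates the condition only when $\delta\le 3$. So I must shrink the part of $N(w)$ that can absorb colors. To force $c(u)\neq c(v)$ for larger $\delta$, the neighborhood of $w$ should be made mostly of vertices whose colors are already forced equal to $\alpha$, so that the count of color $\alpha$ exceeds half.

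The fix I would try first is to replace the single vertex $u$ by a ``clone class''. Take a set $U$ of $t$ vertices that are pairwise forced to share a color. Forcing equality is not directly possible, so instead I would iterate the construction as follows. Let $w$ have neighbors $u_1,\dots,u_{\lceil\delta/2\rceil}$ from class $U$ and the rest from class $V$. Then define the class colors via auxiliary vertices of degree $\delta$. Each auxiliary vertex is adjacent to many members of a class, and its $\delta$ neighbors are arranged so that a strong majority coloring cannot repeat colors much inside the class.

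If this becomes messy, the cleaner alternative is probabilistic or counting-based. Let each original vertex $x_i$ of $K_n$ be replaced by an independent set $X_i$ of size $\delta$. For every pair $i<j$ and every choice of $\lceil\delta/2\rceil$ vertices of $X_i$ and $\lfloor\delta/2\rfloor+1$ vertices of $X_j$, minus adjustments so the total is $\delta$, add a new degree-$\delta$ vertex adjacent to exactly them. Suppose only $m<K$ colors are used. By pigeonhole, each $X_i$ contains at least $\delta/m$ vertices of one color $\beta_i$. Taking $n$ large, again by pigeonhole, two sets $X_i,X_j$ share the same majority color $\beta$. Then I pick the new vertex whose neighborhood consists of $\beta$-colored vertices of $X_i\cup X_j$. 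For this I would let the sets have size $|X_i|=m\delta$ rather than $\delta$, so that each contains $\delta$ vertices of a single color. Its whole neighborhood then has color $\beta$, a contradiction.

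So the final plan is this. Let $X_1,\dots,X_n$ be independent sets of size $K\delta$ with $n>K$. For every pair $i\neq j$ and every $\delta$-subset $S$ containing vertices from both $X_i$ and $X_j$, add a vertex $z_S$ with $N(z_S)=S$. Every $z_S$ has degree $\delta$, and vertices of the $X_i$ have huge degree, so $\delta(G)=\delta$ because $\delta\ge2$. A coloring with fewer than $K$ colors gives two indices $i\ne j$ and a color $\beta$ appearing $\delta$ times in each of $X_i$ and $X_j$. Choose $S$ with $\delta-1$ vertices from $X_i$ and one from $X_j$, all of color $\beta$. Then $z_S$ has all its neighbors colored $\beta$, a contradiction. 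Therefore ${\mathrm Maj}(G)\ge K$. The main obstacle was the forcing step, and this counting construction handles it directly.
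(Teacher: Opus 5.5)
Your final construction is correct (the earlier gadget and clone-class attempts are rightly discarded), and it is essentially the paper's argument: a bipartite graph in which a new vertex of degree exactly $\delta$ is attached to $\delta$-subsets of a large vertex set, followed by pigeonhole on the color classes of that set. The paper does this more economically, using a single set $X$ with $|X|=\lfloor\delta/2\rfloor K$ and all its $\delta$-subsets, and noting that any $\lfloor\delta/2\rfloor+1$ equally colored vertices of $X$ lie in one such neighborhood, so each color class in $X$ has at most $\lfloor\delta/2\rfloor$ vertices; your block structure and your demand for a completely monochromatic neighborhood are unnecessary (though harmless, and your larger sizes even avoid the paper's tacit requirement that $K$ be large enough for $\binom{|X|-1}{\delta-1}\ge\delta$).
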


\begin {proof}
We construct a bipartite graph  $G = (X \cup Y, E)$  with  $|X| = \lfloor \frac {\delta}{2} \rfloor K$ and  $|Y| = {\lfloor \frac {\delta}{2} \rfloor K \choose \delta}$. There is a bijection $g:Y\rightarrow {X\choose \delta}$. For every vertex $y\in Y$, we put edges between
$y$ and all vertices of $g(y)$. Thus, the degree of any vertex $x\in X$ is equal to $d(x)={|X|-1\choose \delta-1}$. Moreover, $N_G(y)=g(y)$, so $\delta(G)=d(y)=\delta$, for every $y\in Y$.
In a strong majority coloring of $G$, at most $\lfloor \frac{\delta}{2} \rfloor$ vertices of $X$ can have the same color. Therefore, at least $K$ colors are needed. 
\end{proof}

\vspace{4mm}
There is the following upper bound for ${\mathrm Maj}(G)$ of a given graph $G$ .
\begin{theorem}\label{2Delta+1}
For any graph $G$ with $\delta(G)\ge 2$, $${\mathrm Maj}(G) \le 2\Delta(G)+1.$$
Moreover, for infinitely many integers $\Delta$ there exist graphs $G$ with $\Delta(G)=\Delta$ and ${\mathrm Maj}(G) = 2\Delta(G)+1.$
\end{theorem}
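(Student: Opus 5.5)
The upper bound comes from reducing the problem to an ordinary proper coloring of an auxiliary graph whose maximum degree is at most $2\Delta(G)$. The sharpness comes from bipartite incidence graphs of Steiner triple systems.

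\textbf{Upper bound.} For every vertex $v$, fix a partition $\mathcal{P}_v$ of $N(v)$ into parts of size $2$, plus one part of size $3$ when $d(v)$ is odd. This is possible since $d(v)\ge 2$. In either case $\mathcal{P}_v$ has exactly $\lfloor d(v)/2\rfloor$ parts.

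Suppose a coloring gives the vertices of every part of every $\mathcal{P}_v$ pairwise distinct colors. Then each color occurs at most once per part of $\mathcal{P}_v$, hence at most $\lfloor d(v)/2\rfloor$ times in $N(v)$. So the coloring is a strong majority vertex-coloring.

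Define an auxiliary graph $H$ on $V(G)$ by joining two vertices whenever they lie in a common part of some $\mathcal{P}_v$. Possible multiple edges are irrelevant. A vertex $u$ belongs to $N(v)$ for exactly $d(u)\le\Delta(G)$ vertices $v$. In each such $\mathcal{P}_v$ the part containing $u$ has at most two other vertices. Therefore $\Delta(H)\le 2\Delta(G)$.

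A greedy proper coloring of $H$ uses at most $2\Delta(G)+1$ colors. By the previous paragraph it is a strong majority vertex-coloring of $G$, which gives ${\mathrm Maj}(G)\le 2\Delta(G)+1$.

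\textbf{Sharpness.} We want $G$ in which some $2\Delta+1$ vertices are forced to receive pairwise distinct colors. The key fact is that all three neighbors of a vertex of degree $3$ must get distinct colors, because at most $\lfloor 3/2\rfloor=1$ of them may share a color.

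Take a Steiner triple system $(X,\mathcal{B})$ with $|X|=2\Delta+1$. Such a system exists whenever $2\Delta+1\equiv 1,3 \pmod 6$, that is, for all $\Delta\equiv 0,1\pmod 3$, which gives infinitely many $\Delta$. Let $G$ be its bipartite point--block incidence graph.

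\begin{itemize}
\item Each point lies in $(|X|-1)/2=\Delta$ blocks, so every vertex of $X$ has degree $\Delta$.
\item Each block vertex has degree $3\le\Delta$ for $\Delta\ge 3$.
\end{itemize}

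Hence $\Delta(G)=\Delta$ and $\delta(G)\ge 2$.

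Every pair of points lies in a common block, that is, it has a common neighbor of degree $3$. So the $2\Delta+1$ vertices of $X$ receive pairwise distinct colors, and ${\mathrm Maj}(G)\ge 2\Delta+1$. With the upper bound, ${\mathrm Maj}(G)=2\Delta+1$. The smallest case $\Delta=3$ is the Heawood graph, the incidence graph of the Fano plane.

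The main obstacle is the sharpness part, namely realizing the extremal bound exactly. The tight case needs every color class to be a single vertex on $2\Delta+1$ vertices. This forces each vertex to have all its $2\Delta$ $H$-edges covered by $\Delta$ triangles with no waste, and that is exactly a Steiner triple system. The upper bound itself is routine.
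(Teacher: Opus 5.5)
Your proof is correct and takes essentially the same route as the paper. For the upper bound, both build an auxiliary graph $H$ on $V(G)$ with $\Delta(H)\le 2\Delta(G)$ whose proper colorings are strong majority colorings, and color it with $\Delta(H)+1$ colors; the paper places a cycle on each $N(v)$ where you use pairs plus at most one triple, an inessential variation, and your greedy coloring suffices where the paper invokes Brooks' theorem. For sharpness, both use the point--block incidence graph of a Steiner triple system on $2\Delta+1$ points.
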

\begin{proof}
Let $H$ be an auxiliary graph with $V(H)=V(G)$ obtained as follows:  For each vertex $v\in V(G)$, if $d_G(v)\ge 3$, introduce in $H$ a cycle on the neighbors of $v$ in $G$ (ordered arbitrarily), and if $d_G(v)=2$, introduce an edge between the two neighbors of $v$.  Note that $\Delta(H)\le 2\Delta(G)$ and that every proper coloring of vertices of $H$ is a strong majority vertex-coloring of $G$. By Brooks' theorem, $\Delta(H)+1$ colors suffice for this coloring.

To justify the second part of the claim, we construct an infinite family of graphs as follows. Take any $n\equiv 1, 3 \mod{6}$. There exists a Steiner triple system STS$(n)$. Let $G$ be the {\it block-point incidence graph of } STS$(n)$, that is, a~bipartite graph $G=(X\cup Y,E)$, where $X=\{x_1,\ldots,x_s\}$ with $s=\frac 16 n(n-1)$, is the set of triples in STS$(n)$, $Y=\{y_1,\ldots,y_n\}$ is the set of points in STS$(n)$, and $E=\{xy: x\in X,y\in Y, y\in x\}$ (the case $n=7$ is illustrated in Figure \ref{STS(3)}). Each vertex in $Y$ has degree $(n-1)/ 2$, while each vertex of $X$ has degree 3. Thus, $\Delta(G)=(n-1)/ 2$. From the definition of STS$(n)$ it follows that for each pair of vertices $y_i,y_j\in Y$, there exists a vertex $y_k\in Y$ such that the triple $\{y_i,y_j,y_k\}$ is a vertex in $X$. Consequently, every vertex of $Y$ has to get a distinct color in a majority vertex-coloring of $G$. Hence, Maj$(G)=n=2\Delta(G)+1$. 

As $\Delta(G)=(n-1)/ 2$, our construction shows that for every positive integer $\Delta\ge 3$, such that $\Delta\equiv 0,1,3,4 \mod 6$, there exists a graph $G$ with $\Delta(G)=\Delta$ and Maj$(G)=2\Delta(G)+1$.
\end{proof}

%+++++++++++++++++++++++++++++++++++++++++++++++++++++++++++++++++++++

\begin{figure}[htb]
\centering
\begin{minipage}{0.45\textwidth}
    \centering
    \begin{tikzpicture}[scale=0.9, every node/.style={circle, draw, fill=white, inner sep=1pt}]
        % Wierzchołki dla pierwszego grafu (Fano plane)
\node (A1) at (0, 0) {$y_1$};
\node (A2) at (4, 0) {$y_2$};
\node (A3) at (2, 3.46) {$y_3$};
\node (A4) at (3, 1.73) {$y_4$};
\node (A5) at (1, 1.73) {$y_5$};
\node (A6) at (2, 0) {$y_6$};
\node (A7) at (2, 1.155) {$y_7$};

% Krawędzie Fano plane
\draw (A1) -- (A2);
\draw (A2) -- (A3);
\draw (A3) -- (A1);
\draw (A1) -- (A4);
\draw (A2) -- (A5);
\draw (A3) -- (A6);

% Okrąg w Fano plane
\draw[thick] (2, 1.155) circle (1.155);

% Podpisy krawędzi
\node[draw=none,fill=none] at (3, -0.3) {$x_1$};
\node[draw=none,fill=none] at (2.7, 2.8) {$x_2$};
\node[draw=none,fill=none] at (0.2, 0.9) {$x_3$};
\node[draw=none,fill=none] at (1.4, 1.1) {$x_4$};
\node[draw=none,fill=none] at (2.6, 1.1) {$x_5$};
\node[draw=none,fill=none] at (2.2, 1.85) {$x_6$};

\node (A4) at (3, 1.73)  [circle, draw, fill=white, inner sep=1pt] {$y_4$};
\node (A5) at (1, 1.73) [circle, draw, fill=white, inner sep=1pt] {$y_5$};
\node (A6) at (2, 0) [circle, draw, fill=white, inner sep=1pt] {$y_6$};
\node (A7) at (2, 1.155) [circle, draw, fill=white, inner sep=1pt] {$y_7$};

    \end{tikzpicture}
\end{minipage}%
\hfill
\begin{minipage}{0.55\textwidth}
    \centering
    \begin{tikzpicture}[scale=0.7, every node/.style={transform shape}]
        % --- Bipartite graph ---
        \node[fill=black, circle, inner sep=1.5pt, label=above:{$x_1$}] (x1) at (6, 4) {};
        \node[fill=black, circle, inner sep=1.5pt, label=above:{$x_2$}] (x2) at (7.5, 4) {};
        \node[fill=black, circle, inner sep=1.5pt, label=above:{$x_3$}] (x3) at (9, 4) {};
        \node[fill=black, circle, inner sep=1.5pt, label=above:{$x_4$}] (x4) at (10.5, 4) {};
        \node[fill=black, circle, inner sep=1.5pt, label=above:{$x_5$}] (x5) at (12, 4) {};
        \node[fill=black, circle, inner sep=1.5pt, label=above:{$x_6$}] (x6) at (13.5, 4) {};
        \node[fill=black, circle, inner sep=1.5pt, label=above:{$x_7$}] (x7) at (15, 4) {};

        \node[fill=black, circle, inner sep=1.5pt, label=below:{$y_1$}] (y1) at (6, 0) {};
        \node[fill=black, circle, inner sep=1.5pt, label=below:{$y_2$}] (y2) at (7.5, 0) {};
        \node[fill=black, circle, inner sep=1.5pt, label=below:{$y_3$}] (y3) at (9, 0) {};
        \node[fill=black, circle, inner sep=1.5pt, label=below:{$y_4$}] (y4) at (10.5, 0) {};
        \node[fill=black, circle, inner sep=1.5pt, label=below:{$y_5$}] (y5) at (12, 0) {};
        \node[fill=black, circle, inner sep=1.5pt, label=below:{$y_6$}] (y6) at (13.5, 0) {};
        \node[fill=black, circle, inner sep=1.5pt, label=below:{$y_7$}] (y7) at (15, 0) {};

        % --- Krawędzie ---
        \draw (x1) -- (y1);
        \draw (x1) -- (y3);
        \draw (x1) -- (y4);

        \draw (x2) -- (y1);
        \draw (x2) -- (y2);
        \draw (x2) -- (y5);

        \draw (x3) -- (y2);
        \draw (x3) -- (y3);
        \draw (x3) -- (y6);

        \draw (x4) -- (y2);
        \draw (x4) -- (y4);
        \draw (x4) -- (y7);

        \draw (x5) -- (y3);
        \draw (x5) -- (y5);
        \draw (x5) -- (y7);

        \draw (x6) -- (y1);
        \draw (x6) -- (y6);
        \draw (x6) -- (y7);

        \draw (x7) -- (y4);
        \draw (x7) -- (y5);
        \draw (x7) -- (y6);
    \end{tikzpicture}
\end{minipage}

\caption{STS$(7)$ represented by the Fano plane (where each line $x_i$ is a triple of points) and the block-point incidence graph $G$ of STS$(7)$}
\label{STS(3)}
\end{figure}

\section{Strong majority edge-colorings}\label{edge}

In this section, we introduce an analog of the strong majority vertex-colorings for edge-colorings.

Given an edge-coloring $c$ of a graph $G=(V,E)$, we say that an edge $e\in E$ is {\it majority colored} if there is no color $\alpha$ such that $e$ is adjacent to more edges of color $\alpha$ than to the other edges. The coloring $c$ 
is called a {\it strong majority edge-coloring} if every edge of $G$ is majority colored. Clearly, such a coloring exists only if $G$ does not contain a pendant path of length more than one, that is, there does not exist a vertex of degree one with a neighbor of degree two. We call such graphs {\it admissible}. On the other hand, if a graph $G$ is admissible, then assigning every edge a distinct color yields a strong majority coloring. Hence, from now on in this section we exclusively consider admissible graphs. The least number of colors in a strong majority edge-coloring of an admissible graph $G$ is called the {\it strong majority index} of $G$, denoted by ${\mathrm Maj'}(G)$. 

Observe that Maj'$(G)$ can be equal to 2 only if the degrees of all vertices of $G$ are even. Let us start with two simple examples. The first one immediately follows from Observation \ref{cycle} and the fact that $L(C_n)=C_n$.
\begin{observation}\label{cycle2}
For a cycle $C_n$, ${\mathrm Maj'}(C_n)=2$ if $n\equiv 0 \mod{4}$, and ${\mathrm Maj'}(C_n)=3$ otherwise.  
\end{observation}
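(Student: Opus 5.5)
The plan is to reduce the statement to Observation \ref{cycle} through the line graph. An edge-coloring of $G$ is the same thing as a vertex-coloring of $L(G)$, and two edges of $G$ are adjacent exactly when the corresponding vertices of $L(G)$ are adjacent. So an edge $e$ is majority colored in $G$ precisely when, in $L(G)$, at most half of the neighbors of the vertex $e$ have any given color $\alpha$. It follows that strong majority edge-colorings of $G$ coincide with strong majority vertex-colorings of $L(G)$, and
$$\mathrm{Maj'}(G)=\mathrm{Maj}(L(G))$$
whenever both sides are defined. This identity is the first step, and it needs only this definitional comparison.

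Next, I note that $L(C_n)\cong C_n$. For $n\ge 3$, every edge of $C_n$ is adjacent to exactly two other edges, namely its predecessor and its successor along the cycle. These adjacencies form a single cycle of length $n$. Therefore $\mathrm{Maj'}(C_n)=\mathrm{Maj}(C_n)$, and Observation \ref{cycle} gives the value $2$ when $n\equiv 0 \pmod 4$ and $3$ otherwise. I should also confirm that $C_n$ is admissible: it has no vertices of degree one, so the invariant is defined.

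I do not expect a real obstacle. The only point needing care is to state the equivalence in the first paragraph correctly. The condition ``at most half of the adjacent edges have color $\alpha$'' is applied to every color $\alpha$, including the color of $e$ itself. That is exactly the strong condition defining $\mathrm{Maj}$, not the weaker majority condition that appears in the Remark of Section 1.

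If a self-contained check is wanted, it can be done directly. Each edge has exactly two neighbors, so its two neighboring edges must receive different colors; that is, edges at distance two along the cycle get distinct colors. The pattern $1,1,2,2,\dots$ works when $4\mid n$. When $4\nmid n$, two colors fail: the sequence would split into two alternating subsequences that must each be constant blocks of length two, which forces $4\mid n$. The explicit endings with color $3$ from the proof of Observation \ref{cycle} then finish the other residues.
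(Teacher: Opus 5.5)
Your proposal is correct and matches the paper's own argument: the paper deduces the observation directly from $\mathrm{Maj'}(G)=\mathrm{Maj}(L(G))$, the fact that $L(C_n)=C_n$, and Observation~\ref{cycle}. Your optional direct check is also sound, though it is cleaner to say that $c(e_{i+2})\neq c(e_i)$ forces a color flip at every step of $i\mapsto i+2$, and this closes up consistently only when $4\mid n$; also, for $n=3$ the ending $1,3,3$ fails as written, and one simply uses three distinct colors.
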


Unlike the strong majority number ${\mathrm Maj}(G)$, the strong majority chromatic index ${\mathrm Maj'}(G)$ for all admissible graphs is bounded from above by a constant.

\begin{theorem}\label{edge col}
For every admissible graph $G$, $${\mathrm Maj'}(G)\le 8.$$    
\end{theorem}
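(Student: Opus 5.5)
The plan is to build an almost-equitable edge-coloring with $8$ colors, and then repair the few edges whose neighbourhood is too small for equitability alone to suffice. For an edge $e=uv$, write $m(e)=d(u)+d(v)-2$ for the number of edges adjacent to $e$. We need every color to occur on at most $m(e)/2$ of them.

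\textbf{Step 1 (equitable coloring).} First make every degree non-divisible by $8$: to each vertex $v$ with $8\mid d_G(v)$, attach a new pendant edge. In the resulting graph $G^+$ no degree is divisible by $8$, so the theorem of Hilton and de Werra on equitable edge-colorings applies. It gives an edge $8$-coloring of $G^+$ in which every vertex $v$ sees each color $\lfloor d_{G^+}(v)/8\rfloor$ or $\lceil d_{G^+}(v)/8\rceil$ times. Restricting to $G$, each vertex $v$ sees each color at most
\[
a(v)=\left\lceil \frac{d_G(v)+1}{8}\right\rceil
\]
times. In particular, a vertex of degree at most $7$ sees pairwise distinct colors.

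\textbf{Step 2 (large neighbourhoods).} For $e=uv$ and a color $\alpha$, the number of edges adjacent to $e$ with color $\alpha$ is at most $\min\{a(u),d(u)-1\}+\min\{a(v),d(v)-1\}$. I would check directly that this is at most $m(e)/2$ whenever $m(e)\ge 4$.
\begin{itemize}
\item If both degrees are at most $7$, each side contributes at most $1$, and $2\le m/2$ holds since $m\ge 4$.
\item If one degree is large, the estimate $(d+1)/8+1\le (d-1)/2$ for $d\ge 8$ leaves enough slack to absorb the other side.
\end{itemize}
This is a routine comparison of $\lceil (d+1)/8\rceil$ with $(d-1)/2$ over a few small cases.

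\textbf{Step 3 (small neighbourhoods, the main obstacle).} The remaining edges have $m(e)\le 3$, i.e.\ $\{d(u),d(v)\}$ is one of $\{2,2\}$, $\{1,3\}$, $\{2,3\}$, $\{1,4\}$ (the case $m=1$ is excluded by admissibility). For such $e$, the at most three adjacent edges must receive pairwise distinct colors. Equitability only guarantees distinctness among edges at a common endpoint; the far edge at $u$ and the far edge at $v$ may clash.

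I would fix this by recoloring. Consider the set $F$ of edges incident with a vertex of degree at most $3$ or with a degree-1--degree-4 edge. Each such constraint involves only edges whose endpoints have bounded degree. I would define an auxiliary conflict graph on $F$: join two edges if they must differ, either because they are adjacent at a vertex of degree $\le 7$ (to keep Step 1's local properness) or because of a short-neighbourhood constraint. I would also forbid, for each edge of $F$, changes that break the Step 2 inequalities at high-degree endpoints. The goal is to show that each edge of $F$ has fewer than $8$ forbidden colors, so that a greedy recoloring of $F$ succeeds.

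The delicate point is the high-degree endpoints: recoloring an edge $f$ of $F$ at a vertex $w$ of degree $\ge 8$ may raise some color count at $w$ above $a(w)$. I would first try to bound this by noting that the Step 2 inequality has slack of order $d(w)/2$. Thus allowing $a(w)+1$ at such $w$ is still fine. Then I would verify that the conflict degree in the auxiliary graph is at most $7$, so that $8$ colors suffice. If that count fails, the fallback is to handle maximal paths of degree-$2$ vertices separately, coloring them like cycles as in Observation~\ref{cycle2}, with the pattern $1,1,2,2,\dots$ adjusted at the ends.
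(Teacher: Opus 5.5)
Your Steps~1 and~2 are sound. The Hilton--de Werra theorem applies to $G^+$, the bound $a(v)=\lceil (d(v)+1)/8\rceil$ is correct, and the comparison with $m(e)/2$ holds whenever $m(e)\ge 4$. Edges of type $\{1,3\}$ and $\{1,4\}$ are in fact already fine, because all their neighbours lie at one vertex of degree at most $4$, where the colouring is proper. So only $\{2,2\}$- and $\{2,3\}$-edges need repair.

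The gap is Step~3. It is the whole difficulty, it is left as a plan, and the count it relies on is false in your set-up:
\begin{itemize}
\item Take $f=zw$ with $d(z)=2$ and $d(w)=7$, where the other neighbour $x$ of $z$ has degree $3$. Properness at $w$ forbids $6$ colours and properness at $z$ forbids $c(zx)$. Because $zx$ is a $\{2,3\}$-edge, $f$ must also avoid the two other colours at $x$. All $8$ colours can be blocked.
\item Take an edge $xy$ with $d(x)=d(y)=3$ whose four other neighbours have degree $2$. It lies in your $F$ and has $4$ properness conflicts plus $4$ conflicts with the far edges at those degree-$2$ vertices, i.e.\ $8$.
\item At a vertex $w$ of large degree, the cap $a(w)+1$ can block $\lfloor (d(w)-1)/(a(w)+1)\rfloor=7$ colours (e.g.\ $d(w)=8q+7$ with $q\ge 8$). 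This comes on top of the three colours forbidden from the degree-$2$ side.
\end{itemize}
So ``conflict degree at most $7$'' fails. The fallback, the pattern $1,1,2,2,\dots$ on paths of degree-$2$ vertices, is left unspecified exactly at the path ends, which is where the constraints bite.

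Your framework can be rescued, but only with choices you did not make. First, recolour only edges incident with a degree-$2$ vertex; every cross-constraint coming from a $\{2,2\}$- or $\{2,3\}$-edge contains such an edge. Second, replace ``properness at degree $\le 7$ and cap $a(w)+1$'' by the caps
\[
\mu(v)=1 \text{ if } d(v)\le 5,\qquad \mu(v)=2 \text{ if } d(v)\in\{6,7\},\qquad \mu(v)=\lfloor (d(v)-1)/5\rfloor+1 \text{ if } d(v)\ge 8 .
\]
The equitable colouring respects these caps, and the Step~2 inequality still holds under them. An edge $zw$ with $d(z)=2$ then has at most $3$ forbidden colours from the $z$-side and at most $4$ from the $w$-side, so one greedy pass with $8$ colours succeeds.

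The paper avoids this bookkeeping by a different route. It splits each vertex of degree at least $4$ into vertices of degree $2$ or $3$ and takes a proper $4$-edge-colouring of the resulting subcubic graph by Vizing's theorem. Properness on each piece automatically bounds colour multiplicities at the original high-degree vertex. The four spare colours are then spent only on local repairs along maximal paths of degree-$2$ vertices.
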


\begin{proof} Let $G=(V,E)$ be an admissible graph, and let $C$ be a set of eight colors. We want to show that there is a strong majority edge-coloring of $G$ with  colors of $C$. We may assume that $G$ is connected; otherwise we can argue component-wise.

We construct a graph $G^*$ using the following operation of splitting vertices. If $\Delta(G)\le 3$, then we put  $G^*=G$.  Otherwise, for every vertex $v\in V$  of degree greater than 3, we partition its neighborhood $N(v)$ into subsets $N_1(v),\ldots,N_s(v)$ such that $|N_i(v)|\in \{2,3\}, i=1,\ldots,s$, and the number of $i$'s with $|N_i(v)|=2$ is at most two. Next, we substitute the vertex $v$ by $s$ new vertices $v_1,\ldots,v_s$, and insert an edge between $v_i$ with every vertex of $N_i(v)$.  

Note that $1\le \delta(G^*)\le\Delta(G^*)\leq 3$ and there is a natural bijection between the edges of $G$ and those of $G^*$.  Obviously, $G^*$ need not be connected. Let $H^*$ be a connected component of $G^*$. Now, we construct a proper and strong majority edge-coloring with 8 colors. 

Suppose first that all vertices of $H^*$ have degree 2 in $H^*$, that is, $H^*$ is a cycle $C_m$. It is easy to find a proper majority 4-edge-coloring in this case. Namely, take four colors, say 1,2,3,4, and color the consecutive edges of $C_m$ with three colors periodically: $1,2,3,1,2,3\ldots$. If $m\equiv 0\mod 3$, then we are done. If $m\equiv 1 \mod 3$, then we put color 4 on the last edge of $C_m$. Otherwise, for $m\equiv 2 \mod 3$, we color the last five  edges of $C_m$ with $1,4,2,3,4$.   
 
Let then $\Delta(H^*)=3$. By Vizing's theorem, $G^*$ has a proper edge-coloring $c$ with four colors. 

Let $e=xy$ be any edge of $H^*$. If both vertices $x,y$ are of odd degree, 1 or 3, then the edge $e$ is majority colored by $c$, for $c$ is proper. Consequently, if $e$ is not majority colored in $H^*$, then it is incident to a vertex of degree 2. 

Consider a maximal induced path $P=x_0,\ldots,x_r$ in $H^*$ with $r\ge 2$. Denote by $H^*_P$ the subgraph of $H^*$ induced by the vertices of $P$ and possible neighbors of the end-vertices $x_0$ and $x_r$. If $H^*_P$ contains an edge that is not majority colored, then we recolor some edges of $P$ as follows.

Suppose first that $P$ is a pendant path in $H^*$ and $d_{H^*}(x_r)=1$. If $d_{H^*}(x_0)=1$, then $H^*_P$ coincides with the path $P$. For any set $C'\subset C$ with $|C'|=3$, there exists a proper coloring $c':E(P)\rightarrow C'$ such that all  inner edges of $P$ are majority colored. 
Let then $d_{H^*}(x_0)=3$, and let $u_1,u_2$ be the two neighbors of $x_0$ outside of $P$. If $x_0u_1$ or $x_0u_2$ is not majority colored, then we recolor the edge $x_0x_1$ with a color from $C$ distinct from the colors of the six edges incident to $u_1$ and $u_2$. Then it is easy to see that we have at least $8-3=5$ free colors for the edge $x_1x_2$. Then we recolor subsequent edges $x_2x_3,\ldots, x_{r-1}x_r$ having 6 choices for each. It follows that any pair of free colors out of five can be used for the subpath $x_0x_1x_2$. We will use this fact later.

Now, assume that both end-vertices $x_0,x_r$ are of degree 3 in $H^*$. Denote by $v_1,v_2$ the two neighbors of $x_r$ outside $P$. We may need to recolor some edges of $P$ to obtain a proper coloring of $H^*_P$
such that every edge of $H^*_P$ is majority colored. 

Let $r=2$, and suppose that the edge $x_0x_1$ is not majority colored. It follows that the color $c(x_0x_1)$ also appears on one of the edges $x_0u_1$ or $x_0u_2$. To make the edge $x_0x_1$ majority colored, we recolor the edge $x_1x_2$. To do this, we cannot use the colors $c(x_0u_1), c(x_0u_2), c(x_0x_1),c(x_2v_1), c(x_2v_2)$. Moreover, if the vertices $v_1,v_2$ are of degree 2, then we cannot use the two colors of the edges outside $H^*_P$ incident to them. In total, we may have seven forbidden colors for the edge $x_1x_2$, so we may be forced to use an eighth color. By symmetry, the same arguments are used to show that we can recolor the edge $x_0x_1$ to make the edge $x_1x_2$ majority colored. 

Now, suppose that $r= 3$. If  $x_0x_1$ or $x_2x_3$ is not majority colored, then we can recolor the edge $x_1x_2$ with any color different from $c(x_0u_1), c(x_0u_2), c(x_0x_1)$ and from $c(x_2x_3),c(x_3v_1),c(x_3v_2)$. Then the subgraph $H^*_P$ gets a proper and strong majority coloring.  

Let then $r\ge 4$. If the edge $x_0x_1$ is not majority colored, we first recolor the edge $x_1x_2$. We choose  any color except $c(x_0u_1), c(x_0u_2), c(x_0x_1)$ for the edge $x_1x_2$. Analogously, if the edge $x_{r-1}x_r$ is not majority colored, we recolor the edge $x_{r-2}x_{r-1}$ with any color except  $c(x_{r-1}x_r),c(x_rv_1), c(x_rv_2)$. Then it is easy to see how to recolor the edges $x_2x_3,\ldots,x_{r-3}x_{r-2}$ with three colors from $C\setminus\{c(x_1x_2),c(x_{r-2}x_{r-1})\}$ to obtain a proper and strong majority coloring of $H^*_P$. 

Thus, we obtain a proper edge-coloring of every connected component of $G^*$, where every edge is majority colored except, possibly, some pendant edges of $G$. The sum of these colorings yields a strong majority edge-coloring of the graph $G$. This is because each coloring is proper, so a new component can add two or three edges with different colors incident to an end-vertex of a given edge. If an edge $e$ is not majority colored, then $e=x_0x_1$ belonged in $H^*$ to a pendant path $x_0,x_1,\ldots x_r$ of length $r\ge 2$. As we have noticed above, we can recolor the edges $x_0x_1, x_1x_2$ so that $e=x_0x_1$ becomes majority colored.
\end{proof}

If $G$ is a line graph of an admissible graph $H$, then obviously ${\mathrm Maj}(G)={\mathrm Maj'\,}(H)$. 

\begin{corollary}
If $G$ is a line graph with $\delta(G)\ge 2$, then $${\mathrm Maj}(G)\le 8.$$ 
\end{corollary}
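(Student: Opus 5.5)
The plan is to reduce the corollary directly to Theorem~\ref{edge col} through the identity ${\mathrm Maj}(G)={\mathrm Maj'\,}(H)$, valid when $G=L(H)$. Let $G$ be a line graph with $\delta(G)\ge 2$, and choose a graph $H$ with $G=L(H)$. Isolated vertices of $H$ do not affect $L(H)$, so we may delete them and assume $H$ has none. The edges of $H$ are in bijection with the vertices of $G$, and two edges of $H$ are adjacent exactly when the corresponding vertices of $G$ are adjacent. Hence the set of edges adjacent to an edge $e$ corresponds exactly to $N_G(e)$.

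Under this bijection, an edge-coloring of $H$ and a vertex-coloring of $G$ are the same object. The condition that at most half of the edges adjacent to $e$ have color $\alpha$ is literally the condition that at most half of the neighbors of the vertex $e$ in $G$ have color $\alpha$. So strong majority edge-colorings of $H$ correspond exactly to strong majority vertex-colorings of $G$, and the two minima agree whenever either is defined.

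The one point that needs checking is that $H$ is admissible. Suppose it is not. Then $H$ has a vertex $u$ of degree one whose neighbor $w$ has degree two, say with $N(w)=\{u,x\}$. The edge $uw$ is then adjacent only to $wx$, because $u$ has no other edges. So the vertex $uw$ of $G$ has degree $1$, which contradicts $\delta(G)\ge 2$. Therefore $H$ is admissible.

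Applying Theorem~\ref{edge col} now gives ${\mathrm Maj}(G)={\mathrm Maj'\,}(H)\le 8$. Nothing here is a real obstacle. The only subtlety is verifying admissibility, which follows from the degree computation above since the degree of $uw$ in $G$ is $d_H(u)+d_H(w)-2$.
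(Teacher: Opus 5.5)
Your proof is correct and follows the paper's route. The paper only remarks that $\mathrm{Maj}(L(H))=\mathrm{Maj}'(H)$ for admissible $H$ and then applies Theorem~\ref{edge col}; your check that $\delta(L(H))\ge 2$ forces $H$ to be admissible (via $d_G(uw)=d_H(u)+d_H(w)-2$) supplies a step the paper leaves implicit.
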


However, we believe that the upper bound in Theorem~\ref{edge col} can be substantially reduced.
\begin{conjecture}\label{Maj'4}
If $G$ is an admissible graph, then 
$${\mathrm Maj'}(G)\le 4.$$ 
\end{conjecture}

The bound 4 would be best possible because there exist graphs $G$ with Maj'$(G)=4$. Let $\widehat{S}$ be a subdivision of a snark $S$ which is a cubic graph of Class 2, that is, with the chromatic index equal to 4. Suppose Maj'$(\widehat{S})\le 3$. Every edge of $\widehat{S}$ has three adjacent edges, so they have three distinct colors in a strong majority edge-coloring. For the same reason, the three edges incident to a vertex of degree 3 must have three distinct colors. Let $x$ be any vertex of degree 2 in $\widehat{S}$ adjacent to vertices $v_1,v_2$ of degree 3. Without loss of generality, assume that the edge $v_1x$ has color 1. Hence, the other edges incident to $v_1$ have colors $2,3$. It follows that the edge $xv_2$ has color 1, the same as $v_1x$. Thus, if  we replace each induced path of length two in $\widehat{S}$ with an edge, we obtain a proper 3-edge-coloring of the snark $S$, contrary to the definition of a snark.

Conjecture \ref{Maj'4} holds for cycles by Observation \ref{cycle2}.  It also holds for complete graphs as Maj'$(K_n)\le 3$ for $n\ge 3$. Indeed, it is easy to verify the claim for $n\le5$. Let $n\ge 6$. It is well known that a complete graph $K_n$ admits a decomposition into Hamiltonian paths or Hamiltonian cycles, depending on the parity of $n$. Partition these paths, respectively, cycles, into three parts as equitable as possible. Assign each part one of three colors. It is not hard to see that this yields a strong majority edge-coloring of $K_n$. 

However, there are much larger classes of graphs for which Conjecture \ref{Maj'4} is true. 

Our first observation follows immediately from the proof of Theorem \ref{edge col}.
\begin{proposition}\label{razy 3}
If each vertex of a graph $G$ has degree divisible by $3$, then Maj'$(G)\le 4$.
\end{proposition}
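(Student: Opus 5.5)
Every vertex degree is a multiple of $3$, so in the splitting step from the proof of Theorem~\ref{edge col} we partition each neighborhood $N(v)$ (for $d(v)\ge 3$) into parts $N_1(v),\ldots,N_s(v)$ all of size exactly $3$; no parts of size $2$ are needed. Vertices of degree $3$ are left unchanged. Since $G$ has no vertices of degree $1$ or $2$ (degrees are positive multiples of $3$, assuming no isolated vertices), the resulting graph $G^*$ is cubic. The natural bijection between $E(G)$ and $E(G^*)$ still holds.

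Next we apply Vizing's theorem to $G^*$ to obtain a proper edge-coloring $c$ with colors $\{1,2,3,4\}$, and transfer it to $G$ through the bijection. We then check that this is a strong majority edge-coloring of $G$. Take an edge $e=xy$ of $G$. At each endpoint, say $x$, the other edges at $x$ split into groups corresponding to the parts $N_i(x)$. The group containing $e$ contributes $2$ other edges, and each other group contributes $3$ edges. Because $c$ is proper on the cubic graph $G^*$, the edges within a group receive pairwise distinct colors, and the $2$ edges in $e$'s group also differ from $c(e)$.

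Now fix a color $\alpha$. Let $d(x)=3a$ and $d(y)=3b$. The number of edges adjacent to $e$ is $3a+3b-2$. Each group of size $3$ contains at most one edge of color $\alpha$, and so does each group of size $2$. Hence the number of adjacent edges colored $\alpha$ is at most the total number of groups, namely $a+b$. We need $a+b\le (3a+3b-2)/2$, which is equivalent to $a+b\ge 2$. This always holds since $a,b\ge 1$. Therefore every edge is majority colored using $4$ colors, and none of the recoloring steps for paths through degree-$2$ vertices from the proof of Theorem~\ref{edge col} are needed.

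The only real point to verify is this counting, in particular the tight case $a=b=1$: there $e$ has $4$ adjacent edges, and $\alpha$ appears at most twice among them. We also assume, as usual, that $G$ has no isolated vertices, which would have degree $0$.
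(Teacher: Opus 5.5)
Your proof is correct and is essentially the paper's argument: the paper says only that the proposition ``follows immediately from the proof of Theorem~\ref{edge col}'', and that proof splits every vertex into degree-$3$ pieces so that $G^*$ is cubic, takes a proper $4$-edge-coloring by Vizing's theorem, and uses properness at each piece. Your explicit count, at most $a+b\le (3a+3b-2)/2$ adjacent edges of any one color, is exactly the verification that ``immediately'' leaves to the reader.
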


In the next proofs, we make use of two results involving the following concept of the $k$-discrepancy of a graph $G$. Given a set $C$ of $k$ colors, an edge coloring $c:E(G)\rightarrow C$, a vertex $v$ of $G$, and a color $i\in C$ let $d_{E_i}(V)$ denote the number of edges incident to $v$ colored with $i$. By the $k$-{\it discrepancy} ${\mathcal D}_k(G)$ of a graph $G$ we mean the following minimum over all $k$-edge-colorings $c$ of $G$:
$${\mathcal D}_k(G)=\min_c\; \max_{v\in V(G)}\; \max_{i,j\in C}\;|d_{E_i}(v)-d_{E_j}(v)|.$$

In 1975, de Werra proved in \cite{deW75} (for a simpler proof, consult \cite{HiStSl98}) the following. 
\begin{theorem} [\cite{deW75}]\label{de}
If $G$ is a bipartite graph, then  ${\mathcal D}_k(G)\le 1$ for any $k\ge 1$.  
\end{theorem}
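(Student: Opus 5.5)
The plan is to reduce the statement to König's edge-coloring theorem: every bipartite graph $H$ has a proper edge-coloring with $\Delta(H)$ colors. The reduction uses the same vertex-splitting trick as the construction of $G^*$ in the proof of Theorem~\ref{edge col}, but with blocks of size $k$ instead of blocks of size $2$ or $3$.

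Let $G=(X\cup Y,E)$ be bipartite and fix $k\ge 1$. If $k=1$ there is nothing to prove, so assume $k\ge 2$. First build the split graph. For every vertex $v$ with $d(v)=d$, write $d=qk+r$ with $0\le r<k$. Partition the edges incident to $v$ arbitrarily into $q$ groups of exactly $k$ edges and, if $r>0$, one further group of $r$ edges. Replace $v$ by one new vertex per group, and make each new vertex the end-vertex of the edges in its group.

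The resulting graph $H$ has the following properties.
\begin{itemize}
\item There is a natural bijection $E(H)\leftrightarrow E(G)$.
\item $\Delta(H)\le k$.
\item $H$ is still bipartite: every copy of a vertex of $X$ is placed in the new class $X'$, every copy of a vertex of $Y$ in $Y'$, and each edge of $H$ still joins $X'$ to $Y'$.
\end{itemize}

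Next apply König's theorem to $H$, which gives a proper edge-coloring $c\colon E(H)\to C$ with $|C|=k$; if $\Delta(H)<k$, simply use a subset of the colors. Transfer $c$ to $G$ through the edge bijection.

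Now count colors at an original vertex $v$ with $d(v)=qk+r$.
\begin{itemize}
\item Each of the $q$ full copies of $v$ has degree $k$ in $H$. Since $c$ is proper with exactly $k$ colors, it sees every color exactly once.
\item The last copy, present only if $r>0$, has degree $r<k$. Since $c$ is proper, it sees each color at most once.
\end{itemize}
Hence $d_{E_i}(v)\in\{q,q+1\}$ for every color $i\in C$, so $|d_{E_i}(v)-d_{E_j}(v)|\le 1$ for all $i,j\in C$. As $v$ was arbitrary, ${\mathcal D}_k(G)\le 1$.

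There is no real obstacle once König's theorem is available; the only point needing care is that the splitting preserves bipartiteness, which holds because copies inherit the side of the original vertex. If one preferred to avoid citing König's theorem, it would be the step to prove, by the standard alternating-path (Kempe chain) recoloring argument. In a bipartite graph such a chain cannot close into an odd cycle, so a color missing at both ends of an uncolored edge can always be freed.
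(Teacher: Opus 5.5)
Your proof is correct. The paper gives no proof of this statement: it quotes it as de Werra's theorem, points to Hilton, Stirling and Slivnik for a simpler proof, and uses it only as a black box in the proposition on bipartite graphs with $\delta(G)\ge 4$.

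Your argument is the standard self-contained route. You split each vertex $v$ into $\lfloor d(v)/k\rfloor$ copies of degree exactly $k$ and at most one copy of degree $d(v) \bmod k$. The split graph stays bipartite with maximum degree at most $k$, so K\"onig's edge-coloring theorem applies. Your count $d_{E_i}(v)\in\{q,q+1\}$ correctly covers vertices of degree less than $k$ and colors that do not appear at $v$.

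What this buys over the citation is transparency: it shows that bipartiteness enters only through K\"onig's theorem. Using Vizing's theorem in its place, as in the construction of $G^*$ in Theorem~\ref{edge col}, would require $k+1$ colors on the degree-$k$ copies. So the argument does not extend to general graphs, which is why the paper instead relies on Theorem~\ref{PP} there.
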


Recently, P\k{e}ka{\l}a and Przyby{\l}o  \cite{PePr25} proved the following general bound for the $k$-discrepancy of graphs.
\begin{theorem}[\cite{PePr25}]\label{PP}
${\mathcal D}_k(G)\le 2$ for any graph $G$ and any $k\ge 1.$
\end{theorem}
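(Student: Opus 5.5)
The plan is to prove the bound by a local rebalancing argument on pairs of colour classes, driven by a potential function. Start from an arbitrary $k$-edge-colouring $c$ of $G$ and consider
$$\Phi(c)=\sum_{v\in V(G)}\sum_{i\in C} d_{E_i}(v)^2 .$$
If $c$ already has $|d_{E_i}(v)-d_{E_j}(v)|\le 2$ for all $v,i,j$, we are done. Otherwise we pick a vertex $v$ and colours $i,j$ with $d_{E_i}(v)-d_{E_j}(v)\ge 3$, recolour only edges currently coloured $i$ or $j$, and keep them in $\{i,j\}$. The aim is for $\Phi$ to strictly decrease, so that the process terminates with discrepancy at most $2$.

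\textbf{The recolouring step.} Let $F$ be the connected component containing $v$ of the subgraph formed by the edges coloured $i$ or $j$. Add an auxiliary vertex $w$ joined to every vertex of odd degree in $F$; this produces an Eulerian multigraph $F'$. Take a closed Euler trail of $F'$ starting at $v$. If $v$ has odd degree in $F$, let the trail leave $v$ through its dummy edge. Then list the real edges in the order the trail traverses them and colour them alternately $i,j,i,j,\dots$.

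Check each passage of the trail through a real vertex $u\neq v$. Either the passage uses two real edges, which are consecutive in the list and so receive different colours, or it uses one real edge and one dummy edge. The dummy edges are consecutive in the trail only around $w$, so they never separate two real edges at $u$. Hence every vertex $u\ne v$ of $F$ ends with $|d_{E_i}(u)-d_{E_j}(u)|\le 1$, and the root $v$ ends with difference at most $2$, the only possible defect being the first and last edges of the trail.

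\textbf{Potential decreases.} At each vertex $u$ of $F$ the sum $d_{E_i}(u)+d_{E_j}(u)$ is unchanged. For a fixed sum, $d_{E_i}^2+d_{E_j}^2$ is minimised exactly when the difference is $0$ or $1$, matching the parity of the sum. So for every $u\neq v$ the contribution of $u$ to $\Phi$ does not increase. At $v$ the difference drops from at least $3$ to at most $2$, and differences of equal parity are being compared, so the drop is from at least $3$ to at most $1$ or from at least $4$ to at most $2$. In either case the contribution of $v$ strictly decreases, and all colours other than $i,j$ are untouched. Thus $\Phi$ strictly decreases, and since $\Phi$ is a nonnegative integer the process stops, giving ${\mathcal D}_k(G)\le 2$.

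\textbf{Main obstacle.} The delicate point is the bookkeeping at the root and at odd-degree vertices. Rebalancing an entire pair of colour classes with many Euler-trail components would create new defects at the roots of the other components and could raise $\Phi$. This is why I restrict the recolouring to the single component $F$ and choose the offending vertex $v$ as its root. I also need to verify carefully that skipping the dummy edges does not break the alternation at odd vertices, and that starting through the dummy edge at an odd root causes no problem. This is where I expect the argument to need care, but the parity observation above indicates it goes through.
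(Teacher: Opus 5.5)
Your proof is correct. The paper gives no argument for this statement: it quotes it from Pękała and Przybyło \cite{PePr25} and uses it only as a black box in Theorem~\ref{delta7}. Your route is therefore a self-contained, elementary substitute. It is essentially the classical Euler-trail recolouring argument by which Hilton and de Werra showed that every graph has a ``nearly equitable'' edge-colouring, which is exactly the statement ${\mathcal D}_k(G)\le 2$.

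The delicate points you flag are already handled by your own construction:
\begin{itemize}
\item Each vertex of $F$ carries at most one dummy edge. So every passage through $u\neq v$ is either a real--real pair, consecutive in the list and hence coloured $i$ and $j$, or it contains that one dummy edge. This gives $|d_{E_i}(u)-d_{E_j}(u)|\le 1$.
\item Leaving an odd root through its dummy edge is genuinely necessary. Otherwise the first edge, the real edge of the real--dummy passage at $v$, and the last edge could all receive colour $i$, leaving difference $3$. With your choice, $v$ ends with difference $1$ in the odd case and $0$ or $2$ in the even case.
\item Because $F$ is a whole component of the $\{i,j\}$-subgraph, the sums $d_{E_i}+d_{E_j}$ are preserved at every vertex and nothing changes outside $F$. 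Your parity comparison then shows that $\Phi$ drops by at least $4$ per step.
\end{itemize}

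Compared with the bare citation, your argument makes the section self-contained. It is also constructive: $\Phi\le\sum_v d(v)^2$ initially, so only polynomially many steps are needed. This makes the colourings behind Theorem~\ref{delta7} and Proposition~\ref{delta9} explicitly computable.
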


Now, we state our most general result regarding Conjecture \ref{Maj'4}. 
\begin{theorem}\label{delta7}
If $G$ is a graph with minimum degree $\delta(G)\ge 7$, then $${\mathrm Maj'}(G)\le 4.$$
\end{theorem}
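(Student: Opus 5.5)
The plan is to apply the bound of P\k{e}ka{\l}a and Przyby{\l}o (Theorem~\ref{PP}) with $k=4$. It gives a $4$-edge-coloring $c$ of $G$ in which, at every vertex $v$, any two colors appear on incident edges a number of times differing by at most $2$. I will check that this coloring, with no further modification, is already a strong majority edge-coloring whenever $\delta(G)\ge 7$. The whole proof then reduces to a local counting argument at the two end-vertices of an edge.

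\textbf{Step 1: bound the multiplicity of a color at a vertex.} Fix a vertex $v$ of degree $d$. Let $m$ be the largest number of edges at $v$ carrying a single color. Each of the other three colors appears at least $m-2$ times at $v$. Hence $d\ge m+3(m-2)=4m-6$, so
$$m\le \left\lfloor \frac{d+6}{4}\right\rfloor .$$
I then verify the integer inequality $\lfloor (d+6)/4\rfloor \le (d-1)/2$ for every $d\ge 7$. For $d\ge 8$ it follows from $d+6\le 2d-2$. For $d=7$ both sides equal $3$.

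\textbf{Step 2: count the edges of color $\alpha$ adjacent to $e$.} Take an edge $e=uv$ with $d(u)=a$ and $d(v)=b$, so $e$ is adjacent to exactly $a+b-2$ edges. Fix a color $\alpha$. The number of edges adjacent to $e$ with color $\alpha$ is at most the number of $\alpha$-edges at $u$ other than $e$ plus the number at $v$ other than $e$. Each of these two terms is at most the maximum multiplicity from Step~1. If $\alpha=c(e)$, each term is in fact smaller by one, so this case is only easier. Therefore the count is at most
$$\left\lfloor\frac{a+6}{4}\right\rfloor+\left\lfloor\frac{b+6}{4}\right\rfloor\le \frac{a-1}{2}+\frac{b-1}{2}=\frac{a+b-2}{2}.$$
This is exactly half the number of edges adjacent to $e$, so every edge is majority colored and ${\mathrm Maj'}(G)\le 4$.

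\textbf{Main obstacle.} The only delicate point is the boundary case $a=b=7$. There the real-valued estimate $(d+6)/4$ is too weak: it would require $d(u)+d(v)\ge 16$. The argument therefore depends on keeping the floor in Step~1. With the floor, $d=7$ gives multiplicity at most $3$, and two such vertices give at most $6=(7+7-2)/2$ edges of one color, so the bound is tight but valid. I will check this small case explicitly, along with the mixed cases $(7,8)$ and $(7,10)$. They all follow from the single inequality $\lfloor (d+6)/4\rfloor\le (d-1)/2$, which also explains why the hypothesis $\delta(G)\ge 7$ is needed: for $d=6$ this inequality fails, since $3>5/2$.
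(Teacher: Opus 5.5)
Your proposal is correct and follows the paper's proof. You use the same $4$-edge-coloring from Theorem~\ref{PP}, the same bound $4k-6\le d(v)$ on the largest color class at each end-vertex, and the same count over the $d(u)+d(v)-2$ edges adjacent to $e$. The only difference is bookkeeping: you take the floor at each vertex separately, which reduces everything to the single inequality $\lfloor (d+6)/4\rfloor\le (d-1)/2$ for $d\ge 7$. This handles the boundary cases such as $d(u)=d(v)=7$ more cleanly than the paper's floor of the sum.
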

\begin{proof}
By Theorem \ref{PP} for $k=4$, any graph $G$ admits an edge 4-coloring $c$ such that $|d_{E_i}(v)-d_{E_j}(v)|\le 2$ for $1\le i<j\le 4.$

Let $e=v_1v_2\in E(G)$. For $i=1,2$, let $k_i$ be the size of the largest color class among the edges incident to $v_i$. It follows that $4k_i-6\le d(v_i), i=1,2.$ Then the number of edges adjacent to $e$ that have the same color is at most 
$$k_1+k_2\le \left\lfloor\frac14(d(v_1)+d(v_2))+3\right\rfloor.$$  The edge $e$ has $d(v_1)+d(v_2)-2$ adjacent edges. Hence, we have to show that the inequality  
\begin{equation}\label{pp}
\left\lfloor\frac14(d(v_1)+d(v_2))+3\right\rfloor\le \left\lfloor\frac12(d(v_1)+d(v_2)-2)\right\rfloor
\end{equation}
holds whenever $\delta(G)\ge 7.$ If we omit the floors, then we get the inequality $d(v_1)+d(v_2)\le 16.$ Hence, the claim holds when $\delta(G)\le 8$. One can easily verify that inequality (\ref{pp}) is also true if $d(v_1), d(v_2)\ge 7,$ even though $d(v_1)+d(v_2)\le 15.$
\end{proof}

Let us admit that analogous arguments justifies the following.

\begin{proposition}\label{delta9}
If $G$ is a graph with minimum degree $\delta(G)\ge 9$, then $${\mathrm Maj'}(G)\le 3.$$ 
\end{proposition}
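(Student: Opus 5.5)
The plan is to repeat the argument of Theorem~\ref{delta7}, using Theorem~\ref{PP} with $k=3$ instead of $k=4$. That theorem gives an edge $3$-coloring $c$ of $G$ such that at every vertex $v$ any two color classes differ in size by at most $2$. Fix an edge $e=v_1v_2$ and let $d_i=d(v_i)$. Let $k_i$ be the size of the largest color class among the edges incident to $v_i$. The other two classes at $v_i$ then have size at least $k_i-2$ each, so $d_i\ge k_i+2(k_i-2)=3k_i-4$. Since $k_i$ is an integer, this gives $k_i\le\lfloor (d_i+4)/3\rfloor$ separately for each endpoint. I will keep the floor at each vertex rather than summing first, because the slack is needed.

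For any color $\alpha$, the number of edges adjacent to $e$ with color $\alpha$ is at most $k_1+k_2$. If $\alpha=c(e)$ the bound improves to $k_1+k_2-2$, since $e$ itself is counted at both ends; the crude bound $k_1+k_2$ is enough anyway. The edge $e$ has $d_1+d_2-2$ adjacent edges. Writing $s=d_1+d_2$, it therefore suffices to show
\begin{equation*}
\left\lfloor\frac{d_1+4}{3}\right\rfloor+\left\lfloor\frac{d_2+4}{3}\right\rfloor\le\left\lfloor\frac{s-2}{2}\right\rfloor
\qquad\text{whenever } d_1,d_2\ge 9 .
\end{equation*}

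The left side is at most $(s+8)/3$, and $(s+8)/3\le (s-3)/2\le\lfloor (s-2)/2\rfloor$ as soon as $s\ge 25$. This leaves finitely many cases, $18\le s\le 24$ with $d_1,d_2\ge 9$, which I will check directly using the residues of $d_i$ modulo $3$.

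\begin{itemize}
\item $s=18$ and $(9,9)$: $4+4=8\le 8$.
\item $(9,10)$: $8\le 8$.
\item $(10,10)$: $8\le 9$; and $(9,11)$: $4+5=9\le 9$.
\item $s=21$: both $(10,11)$ and $(9,12)$ give $9\le 9$.
\item $s=22$: $(11,11)$ and $(9,13)$ give $10$ and $9$, each at most $10$; $(10,12)$ gives $4+5=9\le 10$.
\item $s=23$: $(11,12)$ and $(9,14)$ give $10\le 10$; $(10,13)$ gives $9\le 10$.
\item $s=24$: $(12,12)$ gives $10\le 11$; $(11,13)$ gives $10$; $(10,14)$ and $(9,15)$ give at most $10$, again at most $11$.
\end{itemize}

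The main obstacle is exactly this small-degree range. The summed bound is too weak there, and the case $d_1=d_2=9$ is tight. This is also why the hypothesis is $\delta(G)\ge 9$: for $d=8$ the bound is still $k\le 4$, and two degree-$8$ endpoints would give $8>7$. With all cases verified, every edge is majority colored, so ${\mathrm Maj'}(G)\le 3$.
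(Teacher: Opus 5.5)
Your proof is correct and is exactly the argument the paper intends: the paper only says that the reasoning of Theorem~\ref{delta7} carries over, which amounts to applying Theorem~\ref{PP} with $k=3$, and your case check for $18\le d(v_1)+d(v_2)\le 24$ is accurate. Keeping the floor separately at each endpoint is genuinely needed, because a literal copy of the proof of Theorem~\ref{delta7} would use the summed bound $\lfloor (d(v_1)+d(v_2)+8)/3\rfloor$; for degrees $(9,10)$ that bound equals $9>8$, whereas your per-vertex bound gives $4+4=8$.
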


For bipartite graphs, we can lower the bound for the minimum degree of $G$.
\begin{proposition}
If $G$ is a bipartite graph with minimum degree $\delta(G)\ge 4$, then  $${\mathrm Maj'}(G)\le 4.$$   
\end{proposition}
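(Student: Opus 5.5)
The plan is to follow the proof of Theorem~\ref{delta7} step by step, but to use de Werra's Theorem~\ref{de} in place of Theorem~\ref{PP}. For a bipartite $G$ the discrepancy bound is $1$ instead of $2$, so the largest colour class at each vertex is smaller, and the degree threshold drops from $7$ to $4$.

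First, apply Theorem~\ref{de} with $k=4$. This gives an edge $4$-colouring $c$ of $G$ with $|d_{E_i}(v)-d_{E_j}(v)|\le 1$ for every vertex $v$ and all colours $i,j$. At a vertex $v$ the four class sizes sum to $d(v)$ and differ by at most one. Hence every colour appears on at most $\lceil d(v)/4\rceil$ edges incident to $v$.

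Next, fix an edge $e=v_1v_2$ and a colour $\beta$. The edges adjacent to $e$ are the other edges at $v_1$ and at $v_2$, and there are $d(v_1)+d(v_2)-2$ of them. At most $\lceil d(v_1)/4\rceil+\lceil d(v_2)/4\rceil$ of these have colour $\beta$. If $\beta=c(e)$ the count is even one smaller, but we do not need this. So it suffices to prove
$$\left\lceil\frac{d_1}{4}\right\rceil+\left\lceil\frac{d_2}{4}\right\rceil\le\left\lfloor\frac{d_1+d_2-2}{2}\right\rfloor \quad\text{whenever } d_1,d_2\ge 4.$$

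For the general case, use $\lceil d/4\rceil\le (d+3)/4$. The left side is then at most $(d_1+d_2+6)/4$. Both sides are integers and the right side is at least $(d_1+d_2-3)/2$, so the inequality follows once $(d_1+d_2+6)/4\le (d_1+d_2-3)/2$, that is, once $d_1+d_2\ge 12$.

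The only real work is the finitely many pairs with $4\le d_1\le d_2$ and $d_1+d_2\le 11$, which I check by hand:
\begin{itemize}
\item $(4,4)$: $2\le 3$.
\item $(4,5)$: $3\le 3$.
\item $(4,6)$: $3\le 4$.
\item $(4,7)$: $3\le 4$.
\item $(5,5)$: $4\le 4$.
\item $(5,6)$: $4\le 4$.
\end{itemize}
The tight cases $(4,5)$, $(5,5)$ and $(5,6)$ show that discrepancy $1$, and hence bipartiteness, is really needed. It also explains why minimum degree $3$ would not suffice with this argument: for $(3,3)$ we get $1+1=2\not\le 2-\ldots$, indeed $2\le 2$ holds, but for $(3,4)$ the left side is $2$ and the right side is $\lfloor 5/2\rfloor=2$, so the small cases would need a separate check that the statement avoids.

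This shows that every edge is majority coloured, so $c$ is a strong majority edge-colouring with four colours and ${\mathrm Maj'}(G)\le 4$.
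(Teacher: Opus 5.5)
Your proof is correct and is essentially the paper's argument: de Werra's theorem with $k=4$ gives the per-vertex bound $\lceil d(v)/4\rceil$ on each colour class (the paper's $4k_i-3\le d(v_i)$), and the rest is the same floor inequality plus a finite check of small degree pairs, which you carry out correctly. Your closing aside about $\delta=3$ is wrong, though: the inequality also holds for $(3,3),(3,4),\dots,(3,8)$, so this same argument covers bipartite graphs with $\delta\ge 3$, and your remark does not show that degree $3$ fails.
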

\begin{proof}  We apply Theorem \ref{de} of de Werra for $k=4$ and argue similarly as in the proof of Theorem \ref{delta7}.

Let $e=v_1v_2\in E(G)$. For $i=1,2$, let $k_i$ be the size of the largest color class of the edges incident to $v_i$. It follows from  Theorem \ref{de} that $4k_i-3\le d(v_i), i=1,2.$ Then the number of edges adjacent to $e$ that have the same color is at most $$k_1+k_2\le \left\lfloor\frac14(d(v_1)+d(v_2))+\frac32\right\rfloor.$$ 
Now, it suffices to show that the inequality 
\begin{equation}\label{bip}
\left\lfloor\frac14(d(v_1)+d(v_2))+\frac32\right\rfloor\le \left\lfloor\frac12(d(v_1)+d(v_2)-2)\right\rfloor
\end{equation}
is true whenever $d(x_1)+d(x_2)\le 20$. By omitting the floors, we get the inequality $d(v_1)+d(v_2)\ge 14$.
This is obviously the case whenever $\delta(G)\ge 7.$ It is easy to check that the inequality (\ref{bip}) is maintained in all cases when $d(v_1),d(v_2)\ge 4$ and $d(v_1)+d(v_2)\le 13$. 
\end{proof}

Using the same arguments, one can show that Maj'$(G)\le 3$ for every bipartite graph $G$ with $\delta(G)\ge 5$.

\vspace{3mm} 
Conjecture \ref{Maj'4} also holds for Eulerian graphs. 
\begin{proposition}\label{Euler}
If all vertices of a graph $G$ have even degrees, then $${\mathrm Maj'}(G)\le 4.$$    
Moreover, if $G$ is connected and the size of $G$ satisfies $\|G\|\equiv 0\mod 3$, then Maj'$(G)\le 3$.
\end{proposition}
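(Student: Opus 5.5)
The plan is to color the edges along an Euler circuit so that any three cyclically consecutive edges of the circuit get three distinct colors. This reduces the problem to the same coloring of a cycle $C_m$, with $m=\|G\|$, that appears in the proof of Theorem~\ref{edge col}. We may argue component-wise and ignore isolated vertices. So let $G$ be connected with all degrees even, and fix an Euler circuit $e_1,e_2,\ldots,e_m$ (indices mod $m$).

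First I would choose a coloring $c$ of the cyclic sequence such that $c(e_{i-1}),c(e_i),c(e_{i+1})$ are pairwise distinct for every $i$. If $m\equiv 0 \mod 3$, the periodic pattern $1,2,3,1,2,3,\ldots$ does this with three colors. Otherwise I would use the endings from the proof of Theorem~\ref{edge col}: append $4$ when $m\equiv 1\mod 3$, and replace the last five entries by $1,4,2,3,4$ when $m\equiv 2\mod 3$. I would check the wrap-around triples directly. The small cases $m=3,4,5$ need a direct check; $m=2$ cannot occur in a simple graph.

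Next comes the counting. Each passage of the circuit through a vertex $v$ pairs two edges at $v$ that are consecutive in the circuit, hence differently colored. Since $d(v)=2k_v$, the edges at $v$ split into $k_v$ such pairs, and each color appears at most once per pair. Take an edge $e=e_i=uv$, traversed from $u$ to $v$. Its partner at $u$ is $e_{i-1}$ and its partner at $v$ is $e_{i+1}$. Fix a color $\alpha$.
\begin{itemize}
\item At $u$, the $k_u-1$ other pairs contain at most $k_u-1$ edges of color $\alpha$. The partner $e_{i-1}$ adds one more only if $c(e_{i-1})=\alpha$.
\item The same holds at $v$ with $e_{i+1}$.
\end{itemize}
Since $c(e_{i-1})\neq c(e_{i+1})$, at most one partner has color $\alpha$. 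Hence at most $k_u+k_v-1=\frac12(d(u)+d(v)-2)$ edges adjacent to $e$ have color $\alpha$, which is exactly half the number of edges adjacent to $e$. If the circuit returns through $u$ or $v$ (e.g.\ $e$ lies in a triangle), the same pair counting applies. So every edge is majority colored, giving Maj$'(G)\le 4$, and Maj$'(G)\le 3$ when $\|G\|\equiv 0\mod 3$.

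The main obstacle is the off-by-one in the count. The naive bound ``at most half the edges at each endpoint'' gives only $k_u+k_v$, which is one too many. The fix is the distinctness of the predecessor and successor of $e$ in the circuit, which is why the pattern must be $3$-distinct on consecutive triples and not merely proper. The other point needing care is verifying the wrap-around triples for the $m\not\equiv 0 \mod 3$ endings.
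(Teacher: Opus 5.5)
Your argument is the same as the paper's: an Euler tour colored $1,2,3$ periodically, color $4$ to repair the end, and a count over the passages of the tour through each endpoint. Your explicit bound $k_u+k_v-1=\frac12(d(u)+d(v)-2)$, which uses $c(e_{i-1})\neq c(e_{i+1})$ to save the extra unit, is more careful than the paper's one-line justification.

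There is one case where it breaks. For $m=5$, the invariant you impose ($c(e_{i-1}),c(e_i),c(e_{i+1})$ pairwise distinct for every $i$) cannot be met with four colors at all. Any two positions in $\mathbb{Z}_5$ are at cyclic distance $1$ or $2$, so all five edges would need distinct colors. Concretely, your ending $1,4,2,3,4$ gives the wrap-around triple $c(e_5),c(e_1),c(e_2)=4,1,4$. This ending has the same defect at $m=5$ in the proof of Theorem~\ref{edge col}, which is where you took it from. So the ``direct check'' you defer to fails, and any graph with a $C_5$ component is not covered.

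The repair is easy. A connected simple graph with all degrees even and $5$ edges decomposes into cycles of length at least $3$, so it is $C_5$. There each vertex carries a single passage, so properness along the tour is irrelevant and only $c(e_{i-1})\neq c(e_{i+1})$ is needed. The coloring $1,2,3,4,4$ (or $1,1,2,2,3$, cf.\ Observation~\ref{cycle2}) satisfies this.

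Alternatively, follow the paper: for $m\equiv 2\pmod 3$, put color $4$ on both $e_{m-1}=uv$ and $e_m=vw$. This creates one monochromatic passage at $v$, but it is harmless because color $4$ occurs nowhere else.
\begin{itemize}
\item Take an edge $f=vx$ outside this passage. By simplicity $x\notin\{u,w\}$, so $f$ sees exactly two edges of color $4$, while it has $d(v)+d(x)-2\ge 4$ adjacent edges.
\item For colors $\alpha\neq 4$, your count goes through, with the passage $\{e_{m-1},e_m\}$ contributing nothing.
\end{itemize}
So requiring the coloring to be proper along the tour is sufficient but stronger than needed, and that extra strength is exactly what makes $m=5$ infeasible.
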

\begin{proof}
Clearly, it suffices to prove the claim for any connected graph $G$. Let $m=\|G\|$ and let $W=e_1,\ldots,e_m,e_1$ be an Euler tour in $G$. We color the subsequent edges of $W$, starting from $e_1$ with three colors in the following way: $1,2,3,1,2,3,\ldots,1,2,3$, as long as possible. That is, we color the edges $e_1,\ldots,e_p$, where $p=3\lfloor m/3\rfloor$. 
Observe that each edge $e_i$, possibly except $i\in\{1,p\}$ if $p\ne m$, is majority colored. Indeed, the edges $e_{i-1}$ and $e_{i+1}$ get two different colors, and every passage of $W$ through an end-vertex of $e_i$ adds two different colors of edges incident to it. Consequently, we obtain a strong majority edge-coloring of $G$ if $m\equiv 0\mod 3$. 

Otherwise, if $m\equiv 1,2\mod 3$, we put color 4 on the remaining one or two edges of $W$. 
\end{proof}
\vspace{3mm}

 Let $G$ be an $r$-regular graph. It follows from Proposition \ref{delta7} and Observation \ref{cycle2} that Maj'$(G)\le 3$ when $r\ge 9$ and $r=2$, respectively. Moreover, Maj'$(G)\le 4$ for $r\in\{3,6\}$ by Proposition \ref{razy 3}, and for $r\in\{4,8\}$ by Proposition \ref{Euler}. This bound can be improved for $r=6$.
\begin{observation}\label{6r}
${\mathrm Maj}'(G)\le 3$ for every $6$-regular graph $G$.
\end{observation}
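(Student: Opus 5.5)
For a $6$-regular graph every edge $e=uv$ has exactly $10$ adjacent edges, so a $3$-edge-coloring is strong majority precisely when no color appears on more than $5$ of them. The natural tool is a balanced coloring at every vertex. If each vertex sees each of the three colors exactly twice, then for $e=uv$ of color $\alpha$ the adjacent edges of color $\alpha$ number $1+1=2$. Every other color appears $2+2=4$ times. Either way this is at most $5$, so such a coloring would prove the claim. The plan is therefore to produce a $3$-edge-coloring that is perfectly balanced at every vertex, or at least nearly balanced, with enough slack left to absorb the imbalance.

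First I would use Petersen's theorem. Since $G$ is $6$-regular, every component has an Euler tour, and orienting along it gives every vertex in- and out-degree $3$. Splitting each vertex into an ``out'' copy and an ``in'' copy gives a $3$-regular bipartite graph $B$, and $E(B)$ is in natural bijection with $E(G)$. By König's theorem, $B$ has a proper $3$-edge-coloring. Transfer this coloring to $G$. At every vertex $v$ of $G$, the three out-edges get three distinct colors and the three in-edges get three distinct colors, so each color appears exactly twice at $v$. Equivalently, $G$ decomposes into three $2$-factors, and we color each $2$-factor with its own color.

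Next I would verify the count. Take $e=uv$ of color $\alpha$. Among the $10$ edges adjacent to $e$, color $\alpha$ occurs once at $u$ and once at $v$, other than $e$ itself, for $2$ in total. Each color $\beta\ne\alpha$ occurs twice at $u$ and twice at $v$, for $4$ in total. There are no multiple edges in a simple graph, so these edges are distinct. Since $4\le 5$, every edge is majority colored, and ${\mathrm Maj}'(G)\le 3$.

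The only step requiring care is the existence of the $2$-factorization, which is exactly Petersen's $2$-factor theorem via the Euler-orientation argument above. If one prefers to stay within tools stated in the paper, Theorem~\ref{de} of de Werra, applied to the bipartite graph $B$ with $k=3$, gives the same balanced coloring directly. $B$ is $3$-regular and the discrepancy is at most $1$, so each vertex of $B$ sees each color exactly once, and hence each vertex of $G$ sees each color exactly twice. I expect no real obstacle. The main point is to notice that regularity of degree $6=2\cdot 3$ allows an exactly balanced coloring, whereas Theorem~\ref{PP} alone gives discrepancy $2$. Discrepancy $2$ could allow a class of size $4$ at a vertex, and then a color could reach $3+3=6>5$.
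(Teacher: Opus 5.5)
Your proof is correct and is essentially the paper's argument: you decompose $G$ into three $2$-factors by Petersen's theorem (you also supply the standard Euler-orientation and K\"onig proof of it), give each factor its own color, and count that each edge has $2$ edges of its own color and $4$ of each other color among its $10$ adjacent edges. There is one small slip in your closing side remark, which does not affect the proof: with three colors at a vertex of degree $6$, discrepancy at most $2$ already forces every color class to have size at most $3$ (a class of size $4$ would need the other two classes to have size at least $2$ each), so Theorem~\ref{PP} alone falls short because some color $\beta\neq\alpha$ may have a class of size $3$ at both ends of $e$, giving $3+3=6>5$.
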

\begin{proof}
By the well-known Petersen theorem, every 6-regular graph $G$  admits a decomposition into three 2-factors. Color each 2-factor with a distinct color. Thus, every edge $e$ is adjacent to two edges of the same color as the color of $e$, as well as to four edges of each of the other two colors. The number of edges adjacent to $e$ is equal to 10. Therefore, this is a strong majority edge-coloring of $G$.   
\end{proof}

\vspace{4mm}
\noindent
{\bf Acknowledgment}
\vspace{2mm}

\noindent
The authors thank Zden\v ek Dvo\v r\'ak for his improvement of the proof of Theorem \ref{2Delta+1}.

\bibliographystyle{abbrv} 
\bibliography{literat.bib}

@Article{BKPPRW23,
 Author = {Felix {Bock} and Rafa{\l} {Kalinowski} and Johannes {Pardey} and Monika {Pil\'sniak} and Dieter {Rautenbach} 
 and Mariusz {Wo\'zniak}},
 Title = {{Majority Edge-Colorings of Graphs.}},
 Journal = {{Electron. J. Combin.}},
 Volume = {30},
 Number = {},
 Pages = {\#P1.42},
 Year = {2023}
}

@Article{Lov66,
 Author = {L\'aszl\'o  {Lov\'asz}},
 Title = {{On decomposition of graphs.}},
 Journal = {{Stud. Sci. Math. Hungar.}},
 Volume = {1},
 Number = {},
 Pages = {237--238},
 Year = {1966}
}

@Article{ShMi90,
 Author = {Saharon {Shelah} and E.C. {Milner}},
 Title = {{Graphs with no unfriendly partitions.}},
 Journal = {{ in A. Baker, B. Bollob\'as, A. Hajnal, (eds.), A Tribute to Paul Erdős, Cambridge University Press}},
 Volume = {},
 Number = {},
 Pages = {373–384},
 Year = {1990}
}

@Article{BDGS10,
 Author = {Henning {Bruhn} and Reinhard {Diestel} and Agelos {Georgakopoulos} and Philipp {Spr\"ussel}},
 Title = {{Every rayless graph has an unfriendly partition.}},
 Journal = {{Combinatorica}},
 Volume = {30},
 Number = {5},
 Pages = {521--532},
 Year = {2010}
}

@Article{Ber17,
 Author = {Eli {Berger}},
 Title = {{Unfriendly partitions for graphs not containing a subdivision of an infinite clique.}},
 Journal = {{Combinatorica}},
 Volume = {37},
 Number = {2},
 Pages = {157--166},
 Year = {2017}
}

@Article{KOSvW17,
 Author = {Stephan {Kreutzer} and Sangil {Oum} and Paul {Seymour} and Dominic {van der Zyphen} and David R. {Wood}},
 Title = {{Majority colourings of digraphs.}},
 Journal = {{Electron. J. Combin.}},
 Volume = {24},
 Number = {},
 Pages = {\#P2.25},
 Year = {2017}
}

@Article{AMP90,
 Author = {R. {Aharoni} and E.C. {Milner} and K. {Prikry}},
 Title = {{Unfriendly partitions of a graph}},
 Journal = {{J. Combin. Theory Ser. B}},
 Volume = {50},
 Number = {1},
 Pages = {1--10},
 Year = {1990}
}

@Book{Diestel,
Author = {Reinhard {Diestel} }, 
Title = {{Graph Theory, 5th Edition.}}, 
Publisher = {{Springer-Verlag Berlin Heidelberg New York}},
Year = {2016}}

@Article{KPS25,
 Author = {Rafa{\l} {Kalinowski} and Monika {Pil\'sniak} and Marcin {Stawiski}},
 Title = {{Unfriendly Partition Conjecture holds for line graphs.}},
 Journal = {{Combinatorica}},
 Volume = {45},
 Number = {1},
 Pages = {art. no. 3},
 Year = {2025}
}

@Article{PePr25,
 Author = {Pawe{\l} {P\k{e}ka{\l}a} and Jakub {Przyby{\l}o}},
 Title = {{On List Extensions of the Majority Edge Colourings.}},
Journal = {{Electron. J. Combin.}},
 Volume = {32},
 Number = {4},
 Pages = {\#P4.38},
 Year = {2025}
}

@Article{HiStSl98,
title = {A Vertex-Splitting Lemma, de Werra's Theorem, and Improper List Colourings},
journal = {J. Combin. Theory, Ser. B},
volume = {72},
number = {1},
pages = {91-103},
year = {1998},
issn = {0095-8956},
doi = {https://doi.org/10.1006/jctb.1997.1793},
url = {https://www.sciencedirect.com/science/article/pii/S0095895697917937},
author = {A.J.W. Hilton and D.S.G. Stirling and T. Slivnik}}

@Article{deW75,
 Author = {D. de Werra},
 Title = {A few remarks about chromatic scheduling},
 Journal = {Combinatorial Programming:
Methods and Applications'' (B. Roy, Ed.)},
Publisher = {Reidel, Dordrecht},
Pages = {337--342},
 Year = {1975}}

@article{ABGGPZ25,
title = {Mrs. Correct and majority colorings},
journal = {Discrete Math.},
volume = {348},
number = {11},
pages = {114577},
year = {2025},
issn = {0012-365X},
doi = {https://doi.org/10.1016/j.disc.2025.114577},
url = {https://www.sciencedirect.com/science/article/pii/S0012365X25001852},
author = {Marcin Anholcer and Bartłomiej Bosek and Jarosław Grytczuk and Grzegorz Gutowski and Jakub Przybyło and 
Mariusz Zając}}

\end{document}